\theoremstyle{plain}
\newtheorem{teo}{Teorem}
\newtheorem{theorem}[teo]{Theorem}
\newtheorem{proposition}[teo]{Proposition}
\newtheorem{lemma}[teo]{Lemma}
\theoremstyle{plain}
\theoremstyle{definition}
\theoremstyle{remark}
\newtheorem{remark}[teo]{Remark}
\newtheorem{example}[teo]{Example}
\DeclareMathOperator{\perm}{per}
\begin{document}

\begin{abstract}
We study the asymptotic behavior of permanents of $n \times n$ random matrices $A$ with positive entries. 
We assume that $A$ has either i.i.d.~entries or is a symmetric matrix with the i.i.d.~upper triangle.
Under the assumption that  elements have power law decaying tails, we prove a strong law of large numbers for $\log \perm A$. 
We calculate the values of the limit $\lim_{n \to \infty}\frac{\log \perm A}{n \log n}$ in terms of the exponent of the power law distribution decay, and observe a first order phase transition in the limit as the mean becomes infinite. 
The methods extend to a wide class of rectangular matrices. 
It is also shown that, in finite mean regime, the limiting behavior holds uniformly over all submatrices of linear size.
%Also given are some general bounds on $\frac{\log \perm A}{n \log n}$ which hold uniformly over all submatrices of linear size.
\end{abstract}

\title{Permanents of heavy-tailed random matrices with positive elements}
\author{Ton\'ci Antunovi\'c}
\address{Mathematics Department, University of California, Los Angeles, CA 90095, USA}
\email{tantunovic@math.ucla.edu}

\subjclass[2010]{Primary 60B20, 15B52, 15A15}
\keywords{permanent, random matrices, law of large numbers, heavy tail distribution}

\maketitle

\section{Introduction}

%Let $S_n$ denote the symmetric group of order $n$, that is all possible permutations of the set $\left\{1, \dots n\right\}$.
The permanent of an $m \times n$ matrix $A=(a_{i,j})$ (height $m$ and width $n$) satisfying $m \leq n$ is defined as
% The permanent of an $n \times n$ matrix $A=(a_{i,j})_{i,j}$ is defined as 
% $$
% \perm A = \sum_{\pi \in S_n} \prod_{i=1}^n a_{i,\pi(i)},
% $$
% where $S_n$ is the set of permutation of the set $[n]=\left\{1, \dots, n\right\}$. This definition can be extended to arbitrary rectangular $m \times n$ matrices of height $m$ and width $n$, which satisfy $m \leq n$ as
\[
\perm A = \sum_{\pi} \prod_{i=1}^n a_{i,\pi(i)},
\]
where the sum $\sum_\pi$ goes over the set of one-to-one functions from $[m]=\{1,\dots,m\}$ to $[n]=\{1,\dots,n\}$. 
When $m=n$, that is when $A$ is a square matrix, $\pi$ goes over $S_n$, the set of permutations on $\{1,\dots, n\}$.

In this paper we will study asymptotics of permanents of large random matrices with positive elements.
Permanents of random matrices have been studied in a number of papers. 
Already in \cite{Girko71} and \cite{Girko72} Girko studied the asymptotic behavior of permanent under certain conditions on the characteristic function of the entries of $A$. 
% \[
% \lim_{n \to \infty}\frac{\log |\perm A| - \mathbb{E}\log |\perm A|}{n} \to 0,
% \]
% (without estimating $\mathbb{E}\log |\perm A|$) for $n \times n$ square matrices $A$ with independent elements, when either characteristic functions or Laplace transforms of elements is of the form $\exp(-c|t|^{\alpha})$ (and some other finite mean cases).
Working in the context of perfect matchings on random bipartite graphs, Janson \cite{Janson94}  proved central limit theorems for permanents of matrices with $0$-$1$ i.i.d.~elements.
In a series of papers Rempa{\l}a and Weso{\l}owski studied the permanents of large rectangular matrices with i.i.d.~columns containing non-zero mean and finite variance elements (allowing some correlation among elements in each column). In the case of i.i.d.~elements, relying on earlier results of van Es and Helmers \cite{EsHelmers88} and Borovskikh and Korolyuk \cite{KB92}, 
they proved central limit theorems   for $(\perm A) / \mathbb{E}(\perm A)$ \cite{RempalaWesolowski99},
and later certain strong laws of large numbers \cite{RW02}, when the height of the matrix grows much slower than the width. 
See also Chapter 3 in \cite{RempalaWesolowski08} for a self-contained discussion of these results.
% treat the matrices with independent entries which either characteristic functions or Laplace transforms is of the form $\exp(-c|t|^{\alpha})$ and deduce a law of large numbers for $\log |\det A|$. A similar result for $\perm A$ together with the central limit theorem was proven by Rempa{\l}a in \cite{Rempala96} for matrices with iid, uniformly bounded, positive entries. In \cite{RempalaWesolowski99} Rempa{\l}a and Weso{\l}owski generalized this to matrices with iid entries with finite variance and non-zero mean. Among others, they proved the following.
% \begin{theorem_quote}[from \cite{RempalaWesolowski99}]
% let $A=(a_{i,j})$ be a square $n \times n$ random matrix with iid entries such that $\mathbb{E}(a_{i,j})=\mu$ and $\sigma^2={\rm Var} (a_{i,j})< \infty$. Then
% $$
% \frac{\perm A}{n! \mu^n} \to \exp(\gamma \mathcal{N}-\gamma^2/2),
% $$
% where $\gamma = \sigma/\mu$ and $\mathcal{N}$ is a standard normal random variable.
% \end{theorem_quote}
% Note that the results of \cite{Rempala96} and \cite{RempalaWesolowski99}  also extend to certain non-square matrices. See also Chapter three in \cite{RempalaWesolowski08} for a self-contained discussion of these results. For permanents of random projection matrices and connection to random symmetric polynomial statistics see \cite{KorolyukBorovskikh93}.
When elements have zero mean much less is known.
The only  result of this type is  \cite{TaoVu08} by Tao and Vu, who obtained significantly different behavior for $n \times n$ matrices $A$ with independent (mean zero) Bernoulli $\pm 1$ elements. 
They showed that with high probability $|\perm(A)| = n^{(\frac{1}{2}+o(1))n}$.
The significance of this result lies also in the fact that this was the first proof of the fact that for i.i.d.~Bernoulli $\pm 1$ random matrices $\mathbb{P}(\perm(A) = 0) \to 0$.
%This resolved the question of {\bf  \cite{} from ???}
It doesn't seem that anything substantial is known about the random symmetric matrices. 
In \cite{TaoVu08} it is mentioned that their method does not extend to tackle the symmetric case.
In particular, in \cite{TranVu} it is explicitly conjectured that $|\perm(A)| = n^{(\frac{1}{2}+o(1))n}$, when $A$ is a symmetric mean zero Bernoulli $\pm 1$ matrix.

% showed that, if the coefficients of the square $n \times n$ random matrix $A$ are independent Bernoulli $\pm 1$ random variables with mean $0$, then almost surely $\displaystyle |\perm(A)| = n^{(\frac{1}{2}+o(1))n}$.

The above results demonstrate the contrast between the non-zero mean, finite variance case and the Bernoulli case, which can be summarized as
% Results of Rempa{\l}a and Weso{\l}owski and of Tao and Vu imply that for $m \times n$ matrices $A$
\begin{equation}\label{eq:old_limits}
%\label{eq: 0}
\lim_{m,n \to \infty}\frac{\log |\perm A|}{m \log n} = \left\{\begin{array}{r l} 1, & \text{ in the case of the finite variance and non zero mean,} \\  \frac{1}{2}, & \text{ in the Bernoulli case with zero mean for } m=n. \end{array}\right.
\end{equation}
In the non-zero finite mean case ($\mu$ being mean of the elements) the value of the limit, and especially the upper bounds, can be inferred by calculating the first moment $\mathbb{E}(\perm A)=\binom{n}{m}m!\mu^n$, and in the Bernoulli case from the second moment $\mathbb{E}(|\perm A|^2)=n!$. 
In this paper we will calculate the value of this limit under the assumption that elements are positive and have power law decaying tails $\mathbb{P}(\xi \geq t) = t^{-1/\beta +o(1)}$. In the case $\beta > 1$ elements have infinite mean which prevents us from making any guesses about the the value of the limit. 
In particular, neither the lower or the upper bound are trivial.
In Theorem \ref{thm:main} we will observe a first order phase transition in the limit at  $\beta =1$, when the mean becomes infinite. 
Our results hold when $A$ is a random symmetric matrix as well $A$.

\section{Setup and the Result}
In the text we will assume that for $m \leq n$, $A_{m,n} = (\xi_{i,j})$ is an $m \times n$ matrix ($A_n$ when $m=n$), whose elements $\xi_{i,j}$ are identically distributed random variables (distributed as $\xi$).
We assume that either the entries $(\xi_{i,j})_{i,j}$ of $A_{m,n}$ are independent, or that $A_n$ is a symmetric matrix such that the entries in the upper triangle $(\xi_{i,j})_{i \leq j}$ are independent.
Note that we will drop the subscripts from $A$ when there is no confusion.
% In the text we will assume that we are given a positive random variable $\xi$,
% and that $A_n$ is an $n \times n$ matrix whose elements $\xi_{ij}^{(n)}$  are independent and identically distributed as $\xi$ (note that we will drop the index $n$ when there is no confusion). 
Assuming that the matrices are constructed on a common probability space, theorems below give strong laws of large numbers for $\frac{\log \perm A_n}{n\log n}$. 
%(in particular they imply a weak law of large numbers without the assumption that $A_n$ are given on a common probability space). 
Extensions to the i.i.d.~case for rectangular matrices are given in Section \ref{sec:non-square}.
Note that the almost sure convergence in Theorems \ref{thm:main} and \ref{thm:general_bounds} simply means that the estimates we obtain in the proofs are summable.

To summarize, in the following we will assume that we are given a sequence of $n \times n$  random matrices $A_n = (\xi_{i,j})_{i,j}$ defined on a common probability space, whose elements $\xi_{i,j}$ are strictly positive and identically distributed as $\xi$. Moreover we assume that either
\begin{itemize}
\item[I)] for each $n$, the elements $(\xi_{i,j})_{1\leq i,j\leq n}$ of $A_n$ are independent, or
\item[II)] for each $n$, the elements of $A_n$ in the upper triangle $(\xi_{i,j})_{1\leq i \leq j\leq n}$ are independent and $A_n$ is symmetric, that is $\xi_{i,j} = \xi_{j,i}$.
\end{itemize}

\begin{theorem}\label{thm:main}
Assume that a sequence of random matrices $A_n$ satisfies either I) or II), and that the distribution of their entries satisfies
\begin{equation}\label{eq:heavy_tail_assumption}
\lim_{t \to \infty}\frac{\log \mathbb{P}(\xi \geq t)}{\log t} = - \frac{1}{\beta},
\end{equation}
for some $\beta > 0$. 
% Let $(A_n)_n$ be a sequence of $n \times n$ random matrices on a common probability space with elements distributed as $\xi$. 
% Assume that $A_n$ either has i.i.d.~elements, or is symmetric with i.i.d.~elements in the upper triangle part.
Then we have
\begin{equation}\label{eq:main_result}
\lim_{n \to \infty} \frac{\log \perm A_n}{n \log n} = \max(1,\beta).
\end{equation}
\end{theorem}

Random variable $\xi$ in \eqref{eq:heavy_tail_assumption}  has finite variance for $\beta < 1/2$, finite  mean for $\beta < 1$, and infinite mean for $\beta > 1$ when we observe a limit different from the values in \eqref{eq:old_limits}.

The following result generalizes the case $\beta < 1$. It does not require the finite variance assumption and gives the general lower bounds and the upper bounds in the case of finite mean uniformly over all submatrices of linear size. Note that for an $m \times n$ matrix $A=(\xi_{i,j})$ any matrix $B=(\xi_{i,j})_{i\in I,j\in J}$, where $I \subset \{1,\dots,m\}$, $J \subset \{1,\dots,n\}$ is called a submatrix of of $A_n$.

\begin{theorem}\label{thm:general_bounds}
Assume that a sequence of random matrices $A_n$ satisfies either I) or II), and fix $0<\alpha < 1$. 
\begin{itemize}
% Assume that $(A_n)_n$ is a sequence of $n \times n$ random matrices on a common probability space with positive elements which are identically distributed as $\xi$. 
% Assume that $A_n$ either has i.i.d.~elements, or is symmetric with i.i.d.~elements in the upper triangle part.
% Let $0 < \alpha < 1$.
\item[i)] We have almost surely
\begin{equation}\label{eq:general_lower_bounds}
\liminf_{n \to \infty} \min_{(k,B)}\frac{\log \perm B}{k \log k} \geq 1,
\end{equation}
where the minimum is taken over all integers $\alpha n \leq k \leq n$ and all $k \times k$ submatrices $B$ of $A_n$.
\item[ii)]
If the elements of $A_n$  have a finite mean then almost surely
\begin{equation}\label{eq:general_upper_bounds}
\limsup_{n \to \infty} \max_{(k,B)}\frac{\log \perm B}{k \log k} = 1,
\end{equation}
where the maximum is again taken over all integers $\alpha n \leq k \leq n$ and all $k \times k$ submatrices $B$ of $A_n$.
\end{itemize}
\end{theorem}

Note that by part ii) of the above theorem, Theorem \ref{thm:main} holds for $\beta = 0$ as well.

\begin{remark}\label{rem:mass_at_zero}
It is not difficult to adapt the proof of the lower bounds in Theorem \ref{thm:main}, to show that the convergence from Theorem \ref{thm:main} holds in probability under weaker condition $\xi \geq 0$, that is if we allow the distribution of $\xi$ to have a point mass at zero.
For details see Remark \ref{rem:mass_at_zero_proof}.
However, it is clear that Theorem \ref{thm:general_bounds} i) fails in this case.
To simplify the presentation, we choose to assume $\xi >0$ almost surely.
\end{remark}

Condition \eqref{eq:heavy_tail_assumption} in Theorem \ref{thm:main} is satisfied with $\beta > 1$ for many common heavy tail distributions including Pareto distribution, L\'{e}vy distribution, Inverse-Gamma distribution, Beta-prime distribution.
We are particularly interested in the case when $\xi$ has Pareto distribution with parameter $\beta$, that is $\mathbb{P}(\xi \geq t) =t^{-1/\beta}$ for $t \geq 1$. Actually in Section \ref{sec:upper_bounds} the upper bounds in Theorem \ref{thm:main} will be proven in the Pareto case, and then extended to the general case via simple stochastic domination. Note that when the convergence \eqref{eq:heavy_tail_assumption} fails to hold, one cannot guarantee the existence of the limit in \eqref{eq:main_result} (see Example \ref{ex:no_convergence} in Section \ref{sec:non-square}). However, the upper bound on the $\limsup$ in \eqref{eq:heavy_tail_assumption} will imply the upper bound in \eqref{eq:main_result} and similarly the lower bound for $\liminf$. 

% Also note that these results hold for a large class of non-square matrices as well. We will prove this in theorems \ref{thm:main-rectangular} and \ref{thm:general_bounds-rectangular} (under the condition \eqref{eq:condition_on_the_height} on the growth of the height vs width). 

% One can ask whether the maximum and minimum in \eqref{eq:general_lower_bounds} and \eqref{eq:general_upper_bounds} can be also be taken over smaller values of $k$. In Example \ref{} we show that in general the range of $k$ cannot be extended to  include the regimes when $k \sim n^\gamma$ for any $0 < \gamma < 1$.

\begin{remark}\label{rem:matchings}
From a more combinatorial point of view, permanents can be interpreted in the context of saturated matchings (or perfect matchings for $m=n$) of bipartite graphs. For a bipartite graph $G=(V,E)$, let $V=V_1 \cup V_2$, $|V_1| \leq |V_2|$ be a decomposition of the vertex sets into subsets, so that no two vertices in $V_i$ are connected by an edge, $i=1,2$.
Saturated matchings of $G$ can be defined as subsets $\mathcal{M} \subset E$ of the edge set with the property that every vertex in $V_1$ is adjacent to exactly one edge in $\mathcal{M}$, and that every vertex in  $V_2$ is adjacent to at most one edge in $\mathcal{M}$.
For $m \leq n$ and an $m\times n$ matrix $A=(a_{i,j})$ containing only elements $0$ and $1$, construct a bipartite graph $G$ with $m+n$ vertices $\{v_1, \dots v_m, w_1, \dots w_n\}$ so that $v_i$ and $w_j$ are connected by an edge if and only if $a_{ij}=1$. Clearly every one-to-one function $\pi\colon \{1,\dots,m\} \to \{1,\dots,n\}$ for which $\prod_{i=1}^n a_{i,\pi(i)} = 1$  corresponds to a saturated matching on $G$ in $1$-$1$ manner. Therefore $\perm A$ is equal to the number of saturated matchings on $G$. For general matrices $A$ one can construct the graph by drawing an edge between $v_i$ and $w_j$ whenever $a_{i,j} \neq 0$ and  putting the weight $a_{i,j}$ on this edge. Then $\perm A$ can be interpreted as the total weight of all the saturated matchings on $G$ (a weight of a matching being the product of the weight on its edges). 
This also known as the partition function of the dimer model.
All the results in this paper can be interpreted in this way. 
\end{remark}

In the following section we prove the upper bounds in Theorem \ref{thm:main}, and in Section  \ref{sec:lower_bounds} we provide the lower bounds and prove Theorem \ref{thm:general_bounds}. In the last section we will extend the results to a class of rectangular matrices, and show an example demonstrating that, in general without \eqref{eq:heavy_tail_assumption}, Theorem \ref{thm:main} fails to hold.

\section{Proof of the upper bounds in Theorem \ref{thm:main}}\label{sec:upper_bounds}

The exact calculations needed for the proof of the upper bounds in Theorem \ref{thm:main} are easier to perform when we are given a concrete distribution of $\xi$ to work with. The proof will be provided for the Pareto case, but first we will see how this yields the  upper bounds in Theorem \ref{thm:main} for the general case.

\begin{remark}\label{rem:stochastic_domination}
Throughout the paper we will use the following two simple observations.
%\begin{itemize}

\emph{i)}
For any $m \times n$ matrix $A$ and $\lambda \in \mathbb{R}$ we have that $\perm (\lambda A) = \lambda^m \perm A$. Thus the value of the limit of $\log \perm A_{m,n} /(m \log n )$ in Theorem \ref{thm:main} (as well as $\liminf$ and $\limsup$) is unchanged if we replace the random variable $\xi$ by a random variable $\lambda \xi$, for any $\lambda >0$.  

\emph{ii)}
Given two random variables $\xi_1$ and $\xi_2$ such that $\xi_2$ stochastically dominates $\xi_1$ from above, it is possible to extend a probability space on which $\xi_1$ is defined, and construct $\overline{\xi}_2$ with the same distribution as $\xi_2$ on the extended space such that $\xi_1 \leq \overline{\xi}_2$ pointwise.
For example, if $F_i$ is the cumulative distribution function of $\xi_i$, and if the probability space $(\Omega,\mathbf{P})$ supports $\xi_1$, then consider the probability space $(\Omega \times [0,1], \mathbf{P}\times d\lambda)$, where $d\lambda$ is the Lebesgue measure on $[0,1]$.
On this space define $u = \lambda F_1(\xi_1) + (1-\lambda)F_1^-(\xi_1)$, where $F_1^-(t) = \lim_{s \uparrow t}F_1(s)$ and $\overline{\xi}_2 = \inf\{t:F_2(t)  \geq u\}$.
Then it is easily checked that $u$ has uniform distribution, $\overline{\xi}_2$ has the same distribution as $\xi_2$ and $F_1^-(\xi_1) \leq u \leq F_2(\overline{\xi_2})$, which implies that $\xi_1 \leq \overline{\xi}_2$ pointwise.
This extends to random matrices.
In all the models we consider, one can extend a probability space on which $(A_{m,n}^1)$ is a sequence of $m \times n$ random matrices with elements distributed as $\xi_1$, to construct a sequence $(A_{m,n}^2)$ of $m \times n$ random matrices with elements distributed as $\xi_2$, such that elements of $(A_{m,n}^2)$ dominate the corresponding elements of $(A_{m,n}^1)$ pointwise. 
Then $\perm A_{m,n}^1 \leq \perm A_{m,n}^2$, since all their elements are positive.

\end{remark}

% \begin{lemma}
% \label{lemma: MaxPerm}
% Let $(Y_{i,j})_{i,j}$ be independent exponential random variables with rate 1. Then with probability one
% \begin{equation}
% \label{eq: MaxPerm}
% \lim_{n \to \infty} \frac{\max_{\pi \in S_n}\sum_{i=1}^n Y_{i,\pi(i)}}{n \log n} = 1.
% \end{equation}
% Moreover 
% \begin{equation}
% \label{eq: MaxPermStrong}
% \sum_{n = 1}^{\infty}\mathbb{P}\left(\max_{\pi \in S_n}\sum_{i=1}^n Y_{i,\pi(i)} \geq n\log n + n \frac{\log\log n}{2}\right) < \infty.
% \end{equation}
% \end{lemma}

\begin{proof}[Proof of the upper bounds in Theorem \ref{thm:main} assuming it holds for the Pareto case]
Fix $\epsilon>0$ and take $M>1$  so that $\mathbb{P}(\xi \geq t) \leq t^{-1/(\beta+\epsilon)}$ holds for all $t \geq M$. Denote by $\overline{\xi}_{\beta+\epsilon}$ a Pareto distributed random variable with parameter $\beta+\epsilon$ and observe that $\mathbb{P}(\xi \geq t) \leq \mathbb{P}(M\overline{\xi}_{\beta+\epsilon} \geq t)$ holds for all $t$. Assuming the statement holds for the Pareto case, Remark \ref{rem:stochastic_domination} ii) implies that almost surely
\[
\limsup_n \frac{\log \perm A_n}{n \log n} \leq \beta + \epsilon.
\]
Since $\epsilon > 0$ was arbitrary the claim follows.
\end{proof}

The rest of this section is devoted to the proof of the upper bounds in the Pareto case in which we show explicit calculations. 
A useful observation which we will use extensively is the fact that if $\xi$ is a Pareto distributed random variable with parameter $\beta$, then $Y = (\log \xi)/\beta$ has exponential distribution with rate $1$, that is $\mathbb{P}(Y \geq t)  = e^{-t}$ for $t\geq 0$. We start by proving some basic estimates for maxima of independent exponential random variables.

\begin{lemma}
\label{lemma: Max}
For $n \geq 2$ let $Y_i$, $1 \leq i \leq n$ be independent exponential random variables with rate 1. 
For a fixed $c < 1$ set $ R=c(\max_{1 \leq i \leq n} Y_i)$.
\begin{itemize}
\item[i)] For any  $t>0$ we have
\begin{equation}
\label{eq: Max1}
\mathbb{P}(R \leq t) = \left(1-e^{-t/c}\right)^n \textrm{ and } \  \ \mathbb{P}(R \geq t) \leq ne^{-t/c}.
\end{equation}
\item[ii)] The expectation of $e^{R}$ can be bounded as 
\begin{equation}
\label{eq: Max2}
\mathbb{E}\left(e^R\right) \leq \frac{n^c}{1-c}.
\end{equation}
\end{itemize}
\end{lemma}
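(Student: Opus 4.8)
The plan is to derive both parts directly from the explicit tail $\mathbb{P}(Y_i \geq s) = e^{-s}$ of a rate-$1$ exponential, using only the elementary inequalities $1 - x \leq e^{-x}$ and $1 + x \leq e^x$. For part i) I would compute the lower tail of $R$ exactly and then relax. By independence,
\[
\mathbb{P}(R \leq t) = \mathbb{P}\bigl(\max_i Y_i \leq t\log n\bigr) = \bigl(1 - e^{-t\log n}\bigr)^n = (1 - n^{-t})^n,
\]
and $1 - x \leq e^{-x}$ gives $(1-n^{-t})^n \leq e^{-n\cdot n^{-t}} = e^{-n^{1-t}}$, the first bound in \eqref{eq: Max1}. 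For the upper tail a union bound suffices: $\mathbb{P}(R \geq t) \leq n\,\mathbb{P}(Y_1 \geq t\log n) = n e^{-t\log n} = n^{1-t}$, which is the second bound (and is only informative for $t \geq 1$, where the right-hand side is at most $1$).

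For part ii) the idea is to integrate the tail bound of part i). By the layer-cake identity applied to $t \mapsto e^t$ (equivalently, Tonelli applied to $e^R = 1 + \int_0^R e^t\,dt$),
\[
\mathbb{E}(e^R) = 1 + \int_0^\infty e^t\, \mathbb{P}(R > t)\,dt.
\]
I would split the integral at $t = 1$, using $\mathbb{P}(R > t) \leq 1$ on $(0,1)$ and $\mathbb{P}(R > t) \leq n^{1-t}$ on $(1,\infty)$. The first piece contributes $\int_0^1 e^t\,dt = e - 1$; the second is $n\int_1^\infty e^{-(\log n - 1)t}\,dt = \frac{n\,e^{-(\log n - 1)}}{\log n - 1} = \frac{e}{\log n - 1}$. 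Adding the leading $1$ gives $\mathbb{E}(e^R) \leq e + \frac{e}{\log n - 1} = e\bigl(1 + \frac{1}{\log n - 1}\bigr)$, and $1 + x \leq e^x$ upgrades this to the claimed $\exp\bigl(1 + \frac{1}{\log n - 1}\bigr)$.

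Each individual step is routine; the one point that genuinely needs attention is the range of validity of part ii). The integral above converges only when $\log n > 1$, and indeed $\mathbb{E}(e^R) = +\infty$ already for $n = 2$, since there $e^R \geq e^{Y_1/\log 2}$ with $1/\log 2 > 1$ lying beyond the abscissa of convergence of the exponential moment generating function of $Y_1$. So part ii) is to be read (and will only be applied) for $n$ large enough that $\log n > 1$, i.e.\ $n \geq 3$, a restriction I would state explicitly. It is worth noting that the cruder estimate $e^R = \max_i e^{Y_i/\log n} \leq \sum_i e^{Y_i/\log n}$, giving $\mathbb{E}(e^R) \leq n\log n/(\log n - 1)$, also requires $\log n > 1$ but yields a bound growing in $n$, so it does not suffice and the integration argument is the one to use.
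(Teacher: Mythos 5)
Your proof is correct and follows essentially the same route as the paper: part i) is the identical exact computation with $1-x\leq e^{-x}$ (your union bound for the upper tail is equivalent to the paper's use of $(1-x)^n\geq 1-nx$), and part ii) is the same tail-integration argument, merely written in the variable $t$ with a split at $t=1$ instead of integrating $\mathbb{P}(e^R\geq t)$ with a split at $t=e$. Your remark that part ii) requires $\log n>1$ is a genuine and correct catch — for $n=2$ one indeed has $\mathbb{E}(e^R)=\infty$, so the paper's statement should be read for $n\geq 3$ (this is harmless downstream, since in the summability arguments only large $n$ matters and probabilities are bounded by $1$).
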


\begin{proof}
%\begin{itemize}
i) Both claims are straightforward
% \begin{equation}
% \label{eq: RDown1}
\[
\mathbb{P} (R \leq t) = \prod_{i=1}^n \mathbb{P} (Y_i \leq  t/c) = \left(1-e^{-t/c}\right)^n, \  \mathbb{P} (R \geq t) \leq \sum_{i=1}^n \mathbb{P}(Y_i \geq t/c) = ne^{-t/c}.
\]
%\end{equation}
%Now applying the inequality $1-x \leq e^{-x}$ to the right hand side of (\ref{eq: RDown1}) we obtain the first inequality in (\ref{eq: Max1}). 
% Using (\ref{eq: RDown1}) and the inequality $(1-x)^n \geq 1-nx$ which holds for all positive integers $n$ and all $0 < x < 1$ we prove the second inequality in (\ref{eq: Max1}):
% \begin{equation*}
% \mathbb{P}(R \geq t) = 1- (1-e^{-t/c})^n \leq ne^{-t/c}.
% \end{equation*}

ii) Using the second inequality in (\ref{eq: Max1}) we obtain for $t \geq 1$
$$
\mathbb{P} \left(e^{R} \geq t\right) = \mathbb{P}(R \geq \log t) \leq nt^{-1/c},
$$
from where we get
%\begin{multline*}
\[
\mathbb{E}\left(e^R\right) =  \int_0^{\infty}\mathbb{P}\left(e^R \geq t\right) dt \leq n^c +\int_{n^c}^{\infty}nt^{-1/c} dt  = n^c+ n\Big(\frac{1}{c}-1\Big)^{-1}n^{c-1}
%\frac{nc}{1 -c}\frac{1}{n^{1-c}}
 =   \frac{n^c}{1-c} .
%\leq  \exp\biggl(1+\frac{1}{\log n- 1}\biggr)
\]
%\end{multline*}

%\end{itemize}
\end{proof}

\begin{lemma}\label{lemma:estimates_on_probabilities}
For $n \geq 2$ let $Y_i$, $1 \leq i \leq n$ be independent exponential random variables with rate 1. 
For $0 \leq a < b$, let $p_n(a,b) = \mathbb{P}(a < \sum_{i=1}^n Y_i  \leq b)$
Then 
\[
e^{-a}b^n \geq n!p_n(a,b)  \geq \left\{
\begin{array}{ll}
e^{-b}b^n, & \text{ if } a=0, \\
ne^{-b}(b-a)a^{n-1}, & \text{ if } a>0.
\end{array}
\right.
\]
\end{lemma}

\begin{proof}
Since $\sum_{i=1}^nY_{i}$ is the sum of $n$ independent exponential random variables with mean $1$, it has Gamma density $\frac{x^{n-1}e^{-x}}{(n-1)!}$, for $x>0$.
Therefore  
%\begin{equation}
%\label{eq: Z mean}
\[
n! p_n(a,b) =n! \int_{a}^{b} \frac{x^{n-1}e^{-x}}{(n-1)!}\ dx = n \int_{a}^{b} x^{n-1}e^{-x}\ dx.
\]
%\end{equation}
The upper bound  now follows
\[
%\begin{equation}
%\label{eq: upper bound for Z mean}
n!p_n(a,b) \leq e^{-a}\int_{a}^{b}nx^{n-1}~dx \leq e^{-a}b^n.
%\end{equation}
\]
For the lower bound
\[
%\begin{equation}
%\label{eq: Z1}
n!p_n(a,b) =n \int_{a}^{b} x^{n-1}e^{-x}~dx \geq e^{-b}\int_{a}^{b}nx^{n-1}dx = e^{-b}(b^n-a^n).
%\end{equation}
\]
For $a=0$ we have $n!p_n(a,b)  \geq  e^{-b}b^n$, while for $a>0$  using  $b^n \geq a^n + n(b-a)a^{n-1}$ we obtain the corresponding lower bound.
% $$
% \mathbb{E}(Z_{n,k}) \geq e^{-kn}n^{n+1}(k-1)^{n-1} \geq e^{-kn}n^n(k-1)^n.
% $$
% Therefore, the lower bound in \eqref{eq:Z-estimates} is satisfed for $c_1=1$.
\end{proof}

The idea of the proof of the upper bounds in the Pareto case is to estimate (by evaluating the expectation) the number of permutations $\pi$ for which the product $\prod_{i=1}^n \xi_{i,\pi(i)}$ will lie in some given interval. The key estimate is provided in Lemma \ref{prop: BoundZMax}. We will only consider the intervals not exceeding $(n\sqrt{\log n})^{\beta n}$, since
as the following lemma  shows, the largest product $\prod_{i}\xi_{i,\pi(i)}$ typically does not exceed this value.

In what follows, for an array of random variables $(Y_{i,j})_{1 \leq i,j \leq n}$ it is assumed in the text either that
\begin{itemize}
\item[(A1)] $ (Y_{i,j})_{i,j}$  is an i.i.d.~family of exponential random variables with rate 1, or
\item[(A2)] $ (Y_{i,j})_{i,j}$ is a symmetric array exponential random variables with rate 1, in the sense that $ (Y_{i,j})_{1 \leq i \leq j \leq n}$ is an i.i.d.~family, and $Y_{i,j} = Y_{j,i}$.
\end{itemize}

\begin{lemma}
\label{lemma: MaxPerm}
Assume that $(Y_{i,j})_{i,j}$ satisfies either the condition (A1) or (A2). Then for any $\lambda>0$ 
\begin{equation}
\label{eq: MaxPermStrong}
\sum_{n = 2}^{\infty}\mathbb{P}\left(\max_{\pi \in S_n}\sum_{i=1}^n Y_{i,\pi(i)} \geq n\log n +  \frac{ n \log\log n}{\lambda}\right) < \infty.
\end{equation}
\end{lemma}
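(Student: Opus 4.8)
The plan is to bound the probability in \eqref{eq: MaxPermStrong} by a union bound over all $n!$ permutations and then apply a large deviation estimate for the sum $\sum_{i=1}^n Y_{i,\pi(i)}$ of $n$ independent rate-$1$ exponentials. For a fixed $\pi$, the sum $S_\pi = \sum_{i=1}^n Y_{i,\pi(i)}$ has a Gamma$(n,1)$ distribution, so by Markov's inequality applied to $e^{\theta S_\pi}$ with any $0<\theta<1$ we get $\mathbb{P}(S_\pi \geq x) \leq (1-\theta)^{-n} e^{-\theta x}$. Taking $\theta = 1/2$ (or optimizing) and $x = n\log n + n(\log\log n)/\lambda$ yields $\mathbb{P}(S_\pi \geq x) \leq 2^n e^{-x/2}$. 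Multiplying by $n!$ and using $n! \leq n^n$ gives a bound of roughly $n^n 2^n e^{-(n\log n)/2} e^{-n(\log\log n)/(2\lambda)} = (2\sqrt{n})^n (\log n)^{-n/(2\lambda)}$, which does \emph{not} sum in $n$ — the $n^{n/2}$ factor dominates. So the crude $\theta=1/2$ choice is too lossy; the point is that to kill the $n!=n^{n(1+o(1))}$ term one must push $\theta$ close to $1$.

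Concretely, I would choose $\theta = 1 - 1/\log n$ (or $\theta$ of the form $1 - c/\log n$). Then $(1-\theta)^{-n} = (\log n)^n$, and $\theta x = (1 - 1/\log n)(n\log n + n(\log\log n)/\lambda) = n\log n + n\frac{\log\log n}{\lambda} - n - \frac{\log\log n}{\lambda} \geq n\log n + n\frac{\log\log n}{\lambda} - n - o(n)$. Hence
\[
\mathbb{P}(S_\pi \geq x) \leq (\log n)^n \cdot e^{-n\log n - n(\log\log n)/\lambda + n + o(n)} = e^{n\log\log n}\, n^{-n}\, e^{-n(\log\log n)/\lambda}\, e^{n + o(n)}.
\]
Multiplying by $n! \leq n^n$ cancels the $n^{-n}$, leaving a bound of order $e^{n\log\log n - n(\log\log n)/\lambda + n + o(n)} = \exp\bigl(n\log\log n(1 - 1/\lambda) + O(n)\bigr)$. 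This still fails to be summable unless $1 - 1/\lambda < 0$, i.e. $\lambda < 1$, which is not the regime we want. So the union bound over all $n!$ permutations together with the Gamma tail is \emph{genuinely not enough} to get summability for all $\lambda > 0$; one needs a better estimate on the number of ``bad'' permutations, not just their total count.

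The better approach, which I would actually carry out, is to control $\max_\pi S_\pi$ through the row maxima. Writing $M_i = \max_{1\leq j \leq n} Y_{i,j}$, we trivially have $\max_{\pi} \sum_i Y_{i,\pi(i)} \leq \sum_{i=1}^n M_i$. The $M_i$ are i.i.d., and by Lemma \ref{lemma: Max}(i) each satisfies $\mathbb{P}(M_i \geq t\log n) \leq n^{1-t}$; more usefully, $M_i = \log n + E_i$ where $E_i$ has a tail $\mathbb{P}(E_i \geq s) = \mathbb{P}(M_i \geq \log n + s)= 1-(1-e^{-\log n - s})^n \leq e^{-s}$, so each $E_i$ is stochastically dominated by a rate-$1$ exponential. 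Thus $\sum_i M_i = n\log n + \sum_i E_i$ with $\sum_i E_i$ stochastically dominated by a Gamma$(n,1)$ variable $G$, and it suffices to show
\[
\sum_{n=2}^\infty \mathbb{P}\Bigl(G \geq n\frac{\log\log n}{\lambda}\Bigr) < \infty.
\]
Now the Gamma$(n,1)$ tail is easy: by Markov with $e^{\theta G}$, $\theta = 1/2$, $\mathbb{P}(G \geq y) \leq 2^n e^{-y/2}$, so with $y = n(\log\log n)/\lambda$ we get $\mathbb{P}(G \geq y) \leq \bigl(2 (\log n)^{-1/(2\lambda)}\bigr)^n$, and since $\log n \to \infty$ the base is eventually $< 1$, making the series converge (indeed super-exponentially fast). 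The main obstacle is precisely recognizing that a naive union bound over permutations is too weak and that one must instead bound $\max_\pi S_\pi$ by the sum of row maxima $\sum_i M_i$; once that reduction is made, everything reduces to a standard Gamma/exponential large-deviation estimate plus Lemma \ref{lemma: Max}(i), and the Borel–Cantelli-type summability is immediate.
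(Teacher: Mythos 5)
Your proposal is correct and takes essentially the same route as the paper: the decisive step in both is bounding $\max_{\pi \in S_n}\sum_i Y_{i,\pi(i)}$ by the sum of the row maxima and then applying an exponential-moment (Chernoff) estimate, with your opening discussion correctly identifying why a naive union bound over all $n!$ permutations fails for $\lambda \geq 1$. The only difference is bookkeeping: the paper works with $R_i = \max_j Y_{i,j}/\log n$ and the bound on $\mathbb{E}(e^{R_i})$ from Lemma \ref{lemma: Max}, while you write $\max_j Y_{i,j} = \log n + E_i$ with $E_i$ stochastically dominated by a rate-$1$ exponential and invoke the Gamma moment generating function; both yield the same summable tail bound.
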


\begin{proof}
The (A2) case will be a special case ($\epsilon=1$) of   Lemma \ref{lemma: MaxPerm-symmetric} which follows.
Assume (A1) condition holds.
 For a fixed $c < 1$ denote $R_i := c(\max_{1 \leq j \leq n}Y_{i,j})$. From the definition of $R_i$ it is obvious that $$ \max_{\pi \in S_n} \sum_{i=1}^n Y_{i,\pi(i)} \leq  \frac{1}{c}\left(\sum_{i=1}^n R_i\right).$$ Using the inequality (\ref{eq: Max2}) we have
\begin{multline*}
\mathbb{P} \left(\max_{\pi \in S_n}\sum_{i=1}^n Y_{i,\pi(i)} \geq n\log n + \frac{n \log\log n}{\lambda}\right)  \leq \mathbb{P}\left(\sum_{i=1}^n R_i \geq cn\log n + c\frac{ n \log\log n}{\lambda}\right) \\
\leq \mathbb{E}\left(e^{\sum_{i=1}^nR_{i} - cn\log n - \frac{c}{\lambda} n \log\log n}\right)
 = \left(\frac{\mathbb{E}\left(e^{R_{1}}\right)}{n^c(\log n)^{c/\lambda}}\right)^n = \Big((1-c)(\log n)^{c/\lambda}\Big)^{-n}.
 %\leq \exp\biggl(\Big(\frac{1}{\log n -1} - \frac{\log \log n}{\lambda \log n}\Big)n\biggr).
\end{multline*}
For any $c< 1$, the right hand side above is summable in $n$, which proves the lemma.
\end{proof}

The symmetric case (A2) of Lemma \ref{lemma: MaxPerm}  follows from a stronger result in the following lemma (case $\epsilon =1$).
This lemma also holds in the i.i.d.~case (A1), but we don't need this result in that setting.
\begin{lemma}
\label{lemma: MaxPerm-symmetric}
Assume that $(Y_{i,j})_{i,j}$ satisfies the condition (A2) and fix  $\lambda>0$.
For a fixed $0 < \epsilon \leq 1$ let $\mathfrak{B}_n^\epsilon$ denote the event that for some $\epsilon n \leq k \leq  n$ there are indices $1 \leq p_1 < p_2 < \dots < p_k \leq n$ and a permutation $\pi \in S_n$ such that 
\[
\sum_{j=1}^k Y_{p_j,\pi(p_j)} \geq k \log n + \frac{ k \log\log n}{\lambda}.
\]
Then $\sum_{n \geq 2} \mathbb{P}(\mathfrak{B}_n^\epsilon) < \infty$.
%In particular for $\epsilon=1$,  convergence \eqref{eq: MaxPermStrong} holds under the assumption (A2) as well.
\end{lemma}

\begin{proof}
Take $0 < c< 1/2$ and denote the multiple of the  maximal element in the $i$-th row above the diagonal by $R_i' := c(\max_{i \leq j \leq n}Y_{i,j})$. 
Observe that by \eqref{eq: Max2} we have
\begin{equation}\label{eq: MGF_for_rows_symmetric}
\mathbb{E}\big(e^{R_i'}\big) \leq \frac{(n-i+1)^c}{1-c} \leq \frac{n^c}{1-c}.
\end{equation}
For a fixed permutation $\pi$ consider the set
\[
T_\pi = \left\{(i,\pi(i))\ | \ \text{ when } i \leq \pi(i)\right\} \cup \left\{(\pi(i),i)\ | \ \text{ when } i > \pi(i) \right\},
\]
of coordinates of all elements $Y_{i,\pi(i)}$, reflected over the diagonal to the upper triangular part.
Since the matrix $(Y_{i,j})$ is symmetric, we have $\sum_{i=1}^n  Y_{i,\pi(i)} = \sum_{(i,j) \in T_\pi}Y_{i,j}$.
Observe that for every $1 \leq i \leq n$ there are either zero, one or two elements $j$ such that $(i,j) \in T_\pi$.
Reflecting the elements $(p_i,\pi(p_i))$ across the diagonal, we see that the event $\mathfrak{B}_n^\epsilon$ implies that for some $\epsilon n \leq k \leq  n$ and $0 \leq \ell \leq k/2$ there are disjoint sets of indices $\{i_1^{(2)}, i_2^{(2)}, \dots, i_\ell^{(2)}\}$ and $\{i_1^{(1)}, i_2^{(1)}, \dots, i_{k-2\ell}^{(1)}\}$ such that
%Say that index $i$ is of order 2 if there are two such indices $j$, say that it's of order 1 if there is only one such index $j$.
%Say that the permutation $\pi$ is of type $k$ if there are exactly $k$ indices of order two (and thus $n-2k$ indices of order one).
%Let $\pi$ be of type $k$, and let $i_1^{(2)}, i_2^{(2)}, \dots, i_k^{(2)}$ be indices of order two and $i_1^{(1)}, i_2^{(1)}, \dots, i_{n-2k}^{(1)}$ indices of order one.
%Then
\[
%\sum_{i=1}^n Y_{i,\pi(i)} \leq 
2\sum_{j = 1}^{\ell}R_{i_j^{(2)}}'  + \sum_{j = 1}^{k-2\ell}R_{i_j^{(1)}}' \geq ck\log n + c\frac{k \log \log n}{\lambda}.
\]
For a fixed $k \geq \epsilon n$, fixed $0 \leq \ell \leq k/2$ and a fixed choice of indices $\{i_1^{(2)}, i_2^{(2)}, \dots, i_\ell^{(2)}\}$ and $\{i_1^{(1)}, i_2^{(1)}, \dots, i_{k-2\ell}^{(1)}\}$, the probability of the event above is bounded from above by 
\[
\frac{\prod_{j = 1}^{\ell}\mathbb{E}\exp\big(2R_{i_j^{(2)}}'\big) \cdot \prod_{j = 1}^{k-2\ell}\mathbb{E}\exp\big(R_{i_j^{(1)}}'\big) }{n^{ck} (\log n)^{c k/\lambda}} \leq \frac{\frac{n^{2c\ell}}{(1-2c)^\ell} \cdot \frac{n^{c(k-2\ell)}}{(1-c)^{k-2\ell}} }{n^{ck} (\log n)^{ck/\lambda }} 
\leq \Big((1-2c)(\log n)^{c/\lambda}\Big)^{-\epsilon n}.
\]
Observe that the right hand side above depends only on $n$ and $c$.
Since one can find $(2^n)^2 = 4^n$ pairs of subsets of $\{1,\dots, n\}$, there are at most $4^n$ choices for $k$, $\ell$ and the sets $\left\{i_1^{(2)}, i_2^{(2)}, \dots, i_\ell^{(2)}\right\}$  and $\left\{i_1^{(1)}, i_2^{(1)}, \dots, i_{k-2\ell}^{(1)}\right\}$.
Thus taking the union bound we see that
\[
\mathbb{P}(\mathfrak{B}_n^\epsilon ) \leq \left(\frac{4^{1/\epsilon}}{(1-2c)(\log n)^{c/\lambda}}\right)^{\epsilon n},
\]
which is clearly summable in $n$.

% Thus it suffices to show that
% \begin{equation}\label{eq: MaxPermStrong-symmetric}
% \sum_{0 \leq k \leq n/2}\sum_{i_1^{(2)}, \dots, i_k^{(2)} \atop i_1^{(1)}, \dots, i_{n-2k}^{(1)}} 
% \mathbb{P}\Big( 2\sum_{\ell = 1}^{k}R_{i_\ell^{(2)}}'  + \sum_{\ell = 1}^{k}R_{i_\ell^{(1)}}' \geq cn\log n +  c\lambda n \log\log n\Big)
% \end{equation}
% is summable in $n$.
% Similarly as in the proof of the first case, the probability in the right hand side above is bounded from above by
% \[
% \frac{\prod_{\ell = 1}^{k}\mathbb{E}\exp\big(2R_{i_\ell^{(2)}}'\big) \cdot \prod_{\ell = 1}^{n-2k}\mathbb{E}\exp\big(R_{i_\ell^{(1)}}'\big) }{n^{cn} (\log n)^{c\lambda n}} \leq \frac{\frac{n^{2ck}}{(1-2c)^k} \cdot \frac{n^{c(n-2k)}}{(1-c)^{n-2k}} }{n^{cn} (\log n)^{c\lambda n}} 
% \leq \Big((1-2c)(\log n)^{c\lambda}\Big)^{-n} 
% \]
% Observe that the right hand side above does not depend on $k$. 
% Since one can find $(2^n)^2 = 4^n$ pairs of subsets of $\{1,\dots, n\}$, there are at most $4^n$ choices for $k$ and the sets $\left\{i_1^{(2)}, i_2^{(2)}, \dots, i_k^{(2)}\right\}$  and $\left\{i_1^{(1)}, i_2^{(1)}, \dots, i_{n-2k}^{(1)}\right\}$
% Thus the expression in \eqref{eq: MaxPermStrong-symmetric} is bounded from above by 
% \[
% \left(\frac{4}{(1-2c)(\log n)^{c\lambda}}\right)^{n},
% \]
% which is clearly summable in $n$.
\end{proof}

\begin{remark}\label{remark:permutation_with_no_transpositions}
For $0 \leq \ell < n/2$, let $S_{n}^\ell$ denote the set of permutations in $S_n$ whose cycle decomposition has exactly $\ell$ transpositions (that is, there are exactly $\ell$ unordered pairs $(i,j)$ with $i \neq j$ such that $\pi(i)=j$ and $\pi(j)=i$).
The number of transpositions in a uniformly random permutation converges to a Poisson distribution of rate $1/2$, see \cite{ArratiaTavare}.
We will need a simple consequence, that the probability that a uniformly chosen random permutation on $n \geq 3$ elements is uniformly bounded from zero, that is there is $a>0$ such that for all $n \geq 3$ we have $|S_{n}^0| \geq an!$.
This in particular shows that for $n\geq 3$
\begin{equation}\label{eq:estimates_on_ptransposition_numbers}
\frac{an!}{2^\ell \ell!} \leq |S_{n}^\ell| \leq \frac{n!}{2^\ell \ell!}.
\end{equation}
%In particular this shows that the number of permutations with exactly $l$ transpositions is bounded from above by $n!2^{-l}/l!$ and from below by $an!2^{-l}/l!$, for some $a>0$.
This is because the $\ell$ cycles can be chosen in 
\[
\frac{1}{\ell!}\prod_{i=0}^{\ell-1}\binom{n-2i}{2} = \frac{n!}{2^\ell \ell!(n- 2\ell)!}
\] 
many ways, and there are between $a(n-2\ell)!$ and $(n-2\ell)!$ ways to permute the leftover $n-2\ell$ elements without creating any transpositions.
\end{remark}

In what follows assume the family of random variables $(Y_{i,j})_{i,j}$ to satisfy (A1) or (A2) and define 
\begin{equation}\label{eq:number_in_interval}
Z_{n,k}=\bigg|\biggl\{\pi \in S_n: (k-1)n < \sum_{i=1}^nY_{i,\pi(i)} \leq  kn \biggr\}\biggr|.
\end{equation}

\begin{lemma}\label{lemma:estimates_on_z's}
If $(Y_{i,j})_{i,j}$ satisfies (A1) then for all $1 \leq k \leq n$
\begin{equation}\label{eq:Z-estimates}
\left(kn\right)^{n}e^{-(k-1)n} \geq \mathbb{E}(Z_{n,k}) \geq \left\{
\begin{array}{ll}
 e^{-n}n^n, & \text{ if } k=1, \\
e^{-kn}n^n(k-1)^n, & \text{ if } k>1.
\end{array}
\right.
\end{equation}
% If $(Y_{i,j})_{i,j}$ satisfies (A2) then there exists a constant $a>0$ such that for all $1 \leq k \leq n$
% \begin{equation}\label{eq:Z-estimates-symmetric}
% \mathbb{E}(Z_{n,k,0}) \geq \left\{
% \begin{array}{ll}
% ae^{-n}n^n, & \text{ if } k=1, \\
% ae^{-kn}n^n(k-1)^n, & \text{ if } k>1.
% \end{array}
% \right.
% \end{equation}
\end{lemma}

\begin{proof}
For a fixed $\pi$, $\sum_{i=1}^nY_{i,\pi(i)}$ is the sum of $n$ independent exponential random variables with mean $1$, so
\[
\mathbb{E}(Z_{n,k})=\sum_{\pi \in S_n}\mathbb{P}\biggl((k-1)n < \sum_{i=1}^nY_{i,\pi(i)} \leq  kn \biggr) = n!p_n((k-1)n,kn).
\]
Now \eqref{eq:Z-estimates} follows from the estimates in Lemma \ref{lemma:estimates_on_probabilities} (for the lower bound in the case $k>1$ use $k-1 \leq n$).
\end{proof}

In the symmetric case unfortunately, we need to restrict the upper bounds to the permutations satisfying the condition in the event $\mathfrak{B}_n^\epsilon$.
Symmetry of $A$ is not an issue for permutations in $S_n^0$, so first define
\begin{equation}\label{eq:number_in_interval_no_cycles}
Z_{n,k,0}=\bigg|\biggl\{\pi \in S_n^0: (k-1)n < \sum_{i=1}^nY_{i,\pi(i)} \leq  kn \biggr\}\biggr|.
\end{equation}
For $1 \leq k \leq \log n + \log \log n$ and $1 \leq \ell \leq n/2$ let $Z_{n,k,\ell}^\epsilon$ denote the number of permutation $\pi \in S_n^\ell$ satisfying both
\begin{equation}\label{eq:restricting_permutations}
(k-1)n < \sum_{i=1}^n Y_{i,\pi(i)} \leq kn \ \text{ and } \ \sum_{j=1}^rY_{p_j,\pi(p_j)} \leq r(\log n + \log \log n),
\end{equation}
for all $\epsilon n \leq r \leq n$ and all indices $1 \leq p_1 < p_2 < \dots < p_r \leq n$.
Finally set 
\[Z_{n,k}^\epsilon = Z_{n,k,0} + \sum_{1 \leq \ell \leq n/2}Z_{n,k,\ell}^\epsilon.\]
%where $Z_{n,k,0}$ is as in \eqref{eq:number_in_interval_no_cycles}.

\begin{lemma}\label{lemma:estimates_on_z's-symmetric}
Assume that $(Y_{i,j})_{i,j}$ satisfies (A2). Then there are constants $C>0$ and $a>0$ such that for every $0 < \epsilon \leq  1$ and all $1 \leq k \leq \log n + \log \log n$
\begin{equation}\label{eq:Z-estimates-symmetric-upper}
 \Big(Ce^{-k}kn^{1+\epsilon}\log n\Big)^{n} \geq \mathbb{E}(Z_{n,k}^\epsilon)
\geq \left\{
\begin{array}{ll}
ae^{-n}n^n, & \text{ if } k=1, \\
ae^{-kn}n^n(k-1)^n, & \text{ if } k>1.
\end{array}
\right.
\end{equation}
\end{lemma}

\begin{proof}
The lower bounds follow by 
\[
\mathbb{E}(Z_{n,k}^\epsilon) \geq \mathbb{E}(Z_{n,k,0}) =  \sum_{\pi \in S_n^0}\mathbb{P}\biggl((k-1)n < \sum_{i=1}^nY_{i,\pi(i)} \leq  kn \biggr).
\]
Since the terms $Y_{i,\pi(i)}$, $i=1,\dots, n$ are i.i.d.~when $\pi \in S_n^0$ and using Remark \ref{remark:permutation_with_no_transpositions} to get $|S_{n}^0| \geq an!$, Lemma \ref{lemma:estimates_on_probabilities} bounds the right hand side from below, like in the proof of Lemma \ref{lemma:estimates_on_z's}.
% Now use Lemma \ref{lemma:estimates_on_probabilities} to bound the right hand side from below, like in the proof of Lemma \ref{lemma:estimates_on_z's}.
% Use the fact that the terms $Y_{i,\pi(i)}$, $i=1,\dots, n$ are i.i.d.~when $\pi \in S_n^0$, and that by Remark \ref{remark:permutation_with_no_transpositions} $|S_{n}^0| 
%\geq an!$.
By the same argument
% We will bound separately the expectations of $Z_{n,k,\ell}^\epsilon$ for different values of $\ell$.
% For $\ell = 0$ the computation can be reduced to the one from Lemma \ref{lemma:estimates_on_z's}
\begin{equation}\label{eq:z-sum_formula}
\mathbb{E}(Z_{n,k,0})% \leq \sum_{\pi \in S_n^0}\mathbb{P}\biggl((k-1)n < \sum_{i=1}^nY_{i,\pi(i)} \leq  kn \biggr) 
\leq \left(kn\right)^{n}e^{-(k-1)n}.
\end{equation}
%since the terms $Y_{i,\pi(i)}$, $i=1,\dots, n$ are i.i.d.~when $\pi \in S_n^0$.

% just now the probabilities in the above sum depend on the number of transpositions in the cycle decomposition of $\pi$.
% For $\pi \in S_{n,0}$ all the estimates on $\mathbb{P}\big((k-1)n \leq \sum_{i=1}^nY_{i,\pi(i)} <  kn \big)$ from the (A1) case hold.
% Since by Remark \ref{remark:permutation_with_no_transpositions}, $|S_{n,0}| \geq an!$, restricting the sum in \eqref{eq:z-sum_formula} to $\pi \in S_{n,0}$, yields the lower bounds in \eqref{eq:Z-estimates} for $c=a$.
%For the same reason, the restriction of the sum in \eqref{eq:z-sum_formula} to  $\pi \in S_{n,0}$ is bounded from above by $\left(kn\right)^{n}e^{-(k-1)n}$.

%Denoting $Z_{n,k}^{(\ell)}$ as the number of permutations in $\pi \in S_{n,\ell}$ such that $\sum_i Y_{i,\pi(i)}$ is between $(k-1)n$ and $kn$, this means that $\mathbb{E}(Z_{n,k}^{(0)}) \leq \left(kn\right)^{n}e^{-(k-1)n}$.

If $n$ is even and $\ell=n/2$, then we have $\frac{n!}{2^{n/2}(n/2)!}$ elements in $S_{n}^{n/2}$. For each $\pi \in S_{n}^{n/2}$, the sum $\sum_i Y_{i,\pi(i)}$ is twice the sum of $n/2$ i.i.d.~exponential random variables with rate 1.
Then, ignoring the second condition in \eqref{eq:restricting_permutations}, the upper bound from Lemma \ref{lemma:estimates_on_probabilities} yields similarly as in \eqref{eq:z-sum_formula}
\begin{equation}\label{eq:z-sum_formula_n/2}
\mathbb{E}\Big(Z_{n,k,n/2}^{\epsilon}\Big) \leq \frac{e^{-(k-1)n/2}(kn)^{n/2}n!}{2^{n}(n/2)!^2} \leq e^{-(k-1)n/2}(kn)^{n/2} \leq e^{-(k-1)n}(kn)^{n}.
\end{equation}
In the second inequality above we used the fact that $n!/(n/2)!^2 \leq 2^n$, since the number of $n/2$-element subsets of an $n$-element set is less than the total number of subsets.
For the third inequality we used the fact $e^{-(k-1)}kn \geq 1$, which follows since $ke^{-k} \geq (en)^{-1}$ for all $1 \leq k \leq \log n + \log \log n$. 
This in turn can be checked by observing that the function $k \mapsto ke^{-k}$ is decreasing for $k \geq 1$, and inserting the value $k = \log n + \log \log n$.

% Since the function $k \mapsto ke^{-k}$ is decreasing in $k$, one can plug in $k = \log n + \log \log n$ to see that $ke^{-k} \geq (en)^{-1}$ for all $1 \leq k \leq \log n + \log \log n$.
% Therefore, $e^{-(k-1)}kn \leq 1$ and the right hand side above is less than $e^{-(k-1)n}(kn)^{n}$.

%Recalling that mulitples of exponential random variables are also exponentialy distributed, 
Now assume $1 \leq \ell \leq n/2$ and exclude the above case when both $n$ is even and $\ell=n/2$. 
This ensures that $n-2\ell \geq 1$.
For a fixed $\pi \in S_{n}^{\ell}$ we have 
\[
\sum_{i=1}^nY_{i,\pi(i)} = 2R_1 + R_2,
\]
where $R_1$ is the sum $\ell$ independent variables with exponential distribution of rate $1$, and $R_2$ is the sum $n-2\ell$ independent variables with exponential distribution of rate $1$.
Note that for any fixed $\pi$, random variables $R_1$ and $R_2$ are independent.
% For $n$ even and $l =n/2$, the density of $\sum_{i=1}^nY_{i,\pi(i)}$ is $\frac{x^{n/2-1}e^{-x/2}}{2^{n/2}(n/2-1)!}$, for $x>0$ and thus
% \begin{multline*}
% \sum_{\pi \in S_{n,n/2}}\mathbb{P}\biggl((k-1)n \leq \sum_{i=1}^nY_{i,\pi(i)} <  kn \biggr) 
% \leq \frac{n!}{2^{n/2}(n/2)!} \int_{(k-1)n}^{kn} \frac{x^{n/2-1}e^{-x/2}}{2^{n/2}(n/2-1)!} ~dx \\
% \leq \frac{n!}{2^{n}(n/2)!(n/2-1)!}\int_{(k-1)n}^{kn} x^{n/2-1}e^{-x/2} ~dx 
% \leq \frac{n!}{2^{n}(n/2)!^2}e^{-(k-1)n/2}(kn)^{n/2}.
% \end{multline*}
Next set
\[
j_{n,k,\ell} = \left\{
  \begin{array}{ll}
    \min\{2\ell( \log n + \log \log n)/n,k\}, & \text{ if } \ell > \epsilon n,\\
    \min\{2 \epsilon (\log n + \log \log n),k\}, & \text{ otherwise.}
  \end{array}
\right.
\]
Clearly, if $\pi \in S_n^\ell$ satisfies both conditions in \eqref{eq:restricting_permutations} then $R_1 \leq nj_{n,k,\ell}/2$.
Using the notation from Lemma \ref{lemma:estimates_on_probabilities}, the probability that a permutation $\pi \in S_{n}^{\ell}$ satisfies both conditions in \eqref{eq:restricting_permutations} can be bounded from above by
\begin{align}\label{eq:long_estimates}
& \sum_{1 \leq j \leq j_{n,k,\ell}}p_\ell((j-1)n/2,jn/2)p_{n-2\ell}((k-j-1)n, (k-j+1)n) \nonumber \\
& \leq \sum_{1 \leq j \leq j_{n,k,\ell}}\frac{e^{-(j-1)n/2}(jn/2)^\ell}{\ell!} \cdot \frac{e^{-(k-j-1)n}((k-j+1)n)^{n-2\ell}}{(n-2\ell)!} \nonumber  \\
& = \frac{e^{-(k-3/2)n}n^{n-\ell}}{2^\ell \ell!(n-2\ell)!} \sum_{1 \leq j \leq j_{n,k,\ell}}e^{jn/2}j^\ell (k-j+1)^{n-2\ell}.
%+ \frac{e^{-(k-1)n/2}(kn/2)^\ell}{\ell!} \cdot \frac{n^{n-2\ell}}{(n-2\ell)!},
\end{align}
Fix some $1 \leq j \leq j_{n,k,\ell}$.
For $\ell \leq \epsilon n$ we have $e^{jn/2} \leq n^{\epsilon n} (\log n)^{\epsilon n}$, while for $\ell > \epsilon n$ we have $e^{jn/2} \leq n^{\ell} (\log n)^{\ell}$.
Note that obtaining these bounds is the reason why we needed to modify the definition on $Z_{n,k}$ from the i.i.d~case.
Using this together with the fact that $j^\ell (k-j+1)^{n-2\ell} \leq k^{n-\ell}$ for any $1 \leq j \leq j_{\ell,k,n}$, the expression in \eqref{eq:long_estimates} can be bounded from above for $n$ large enough by
\[
j_{n,k,\ell}\frac{e^{-(k-3/2)n}(kn)^{n-\ell}}{2^\ell \ell!(n-2\ell)!}\big(n^{\epsilon n}(\log n)^{\epsilon n} + n^\ell(\log n)^\ell\big),
\]
which in turn is no more than
\[ 
\frac{n!}{2^\ell \ell!(n-2\ell)!}(2e^{3/2})^ne^{-kn}k^nn^{(1+\epsilon)n}(\log n)^{n}.
\]
By Remark \ref{remark:permutation_with_no_transpositions} there are $\frac{n!}{2^{\ell}\ell!}$ permutations in $S_{n}^\ell$ so
\[
\mathbb{E}(Z_{n,k,\ell}^{\epsilon}) \leq \frac{n!}{4^\ell (\ell!)^2(n-2\ell)!}(2e^{3/2})^ne^{-kn}k^nn^{(1+\epsilon)n}(\log n)^{n}.
\]
For the first term on the right hand side
\[
\frac{n!}{4^\ell (\ell!)^2(n-2\ell)!} = \binom{n}{2\ell}\binom{2\ell}{\ell}4^{-\ell},
\]
we see that it's less than $2^{n}\cdot 2^{2\ell} \cdot 4^{-\ell} = 2^{n}$.
Therefore,
\[
\mathbb{E}(Z_{n,k,\ell}^{\epsilon}) \leq (4e^{3/2})^ne^{-kn}k^nn^{(1+\epsilon)n}(\log n)^{n}.
\]
Summing the above bound over $1 \leq \ell \leq n/2$, adding \eqref{eq:z-sum_formula} and \eqref{eq:z-sum_formula_n/2} and adjusting the value of the constant $C$, yields the claim.
%where we separated the $j=k$ term to avoid using the negative lower bound $(k-j-1)n$ in the estimates.
\end{proof}

\begin{lemma}
\label{prop: BoundZMax}
Under the assumption (A1)  for the array $(Y_{i,j})_{i,j}$ we have
%\begin{equation}\label{eq:number_in_interval}
%Z_{n,k}=\bigg|\biggl\{\pi \in S_n: (k-1)n \leq \sum_{i=1}^nY_{i,\pi(i)} <  kn \biggr\}\biggr|.
%\end{equation}
for any $\gamma>1$ and $\lambda > 1$  
\begin{equation}
\label{eq: BoundZMax}
\sum_{n = 2}^{\infty} \mathbb{P}\biggl(\bigcup_{1 \leq k \leq \log n + \frac{ \log \log n}{\lambda}}\left\{Z_{n,k} > \mathbb{E}(Z_{n,k})^{\gamma}\right\}\biggr) < \infty.
\end{equation}
Under the assumption (A2), the convergence \eqref{eq: BoundZMax} holds when $Z_{n,k}$ is replaced by $Z_{n,k}^\epsilon$, for any $\epsilon > 0$.
\end{lemma}

\begin{proof}
First by Markov's inequality 
\begin{align*}
\label{eq: BoundZMarkov}
\mathbb{P}\biggl(\bigcup_{1 \leq k \leq \log n + \frac{\log \log n}{\lambda}}\left\{Z_{n,k} > \mathbb{E}(Z_{n,k})^{\gamma}\right\}\biggr) & \leq \sum_{1 \leq k \leq \log n + \frac{\log \log n}{\lambda}} \mathbb{P}\left(Z_{n,k} > \mathbb{E}(Z_{n,k})^{\gamma}\right)
 \\
& 
\leq \sum_{1 \leq k \leq \log n + \frac{\log \log n}{\lambda}} \frac{1}{ \mathbb{E}(Z_{n,k})^{\gamma-1}}.
\end{align*}
%Let's calculate the expectation of $Z_{n,k}$. 
We will use the lower bounds from Lemma \ref{lemma:estimates_on_z's}, to prove that the expression on the right hand side is summable in $n$, which will complete the proof.
Since the lower bounds in Lemma \ref{lemma:estimates_on_z's-symmetric} differ only by a multiplicative constant, the claim for $Z_{n,k}^\epsilon$ follows in the same way.
Therefore, we will restrict our attention to the proof of \eqref{eq: BoundZMax}.

The inequality $\mathbb{E}(Z_{n,1}) \geq e^{-n}n^n$, implies that the series $\sum_{n=1}^{\infty}\mathbb{E}(Z_{n,1})^{1-\gamma}$ converges to a finite limit. 
Therefore we are left to prove 
\begin{equation}
\label{eq: Z2}
\sum_{n =2}^{\infty} \sum_{2 \leq k \leq \log n + \frac{\log \log n}{\lambda}} \frac{1}{ \mathbb{E}(Z_{n,k})^{\gamma-1}} < \infty.
\end{equation}
From the lower bound in  \eqref{eq:Z-estimates} we have that
\begin{equation}\label{eq: BoundUp}
 \sum_{2 \leq k \leq \log n + \frac{ \log \log n}{\lambda}} \frac{1}{ \mathbb{E}(Z_{n,k})^{\gamma-1}}  \leq \frac{e^{n (\gamma-1)}}{n^{n(\gamma-1)}} \sum_{1 \leq k \leq \log n + \frac{ \log \log n}{\lambda}} \frac{e^{kn(\gamma-1)}}{k^{n (\gamma-1)}}.
\end{equation}
The function $g(t)=e^{tn(\gamma-1)}t^{-n (\gamma-1)}$ is convex since $$g''(t)=n(\gamma-1) \frac{e^{tn(\gamma-1)}}{t^{n (\gamma-1) +2}}(n (\gamma-1) (t-1)^2+1) \geq 0.$$
Therefore for any $1 \leq k \leq \log n + \frac{\log \log n}{\lambda}$ we have 
\begin{equation}
\label{eq: gConvex1}
\frac{e^{kn(\gamma-1)}}{k^{n (\gamma-1)}} = g(k) \leq \max\left\{g(1),g\left(\log n + \frac{\log \log n}{\lambda}\right)\right\}.
\end{equation}
For any $n$ large enough we have
$$
g\left(\log n + \frac{\log \log n}{\lambda}\right) = \frac{n^{n(\gamma-1)}(\log n)^{\frac{n (\gamma-1)}{\lambda}}}{(\log n + \frac{\log \log n}{\lambda})^{n (\gamma-1)}} \geq \left(\frac{n}{2(\log n)^{1-1/\lambda}}\right)^{n (\gamma-1)} \geq e^{n (\gamma-1)} = g(1),
$$
and, for such $n$, using (\ref{eq: gConvex1}) we also get
$$
\frac{e^{kn(\gamma-1)}}{k^{n (\gamma-1)}} \leq g\left(\log n + \frac{\log \log n}{\lambda}\right) = \frac{n^{n(\gamma-1)}(\log n)^{\frac{n (\gamma-1)}{\lambda}}}{(\log n + \frac{\log \log n}{\lambda})^{n (\gamma-1)}} \leq \frac{n^{n(\gamma-1)}}{(\log n)^{n(\gamma-1)(1-1/\lambda)}}.
$$ 
Thus, for $n$ large enough (\ref{eq: BoundUp}) yields 
\begin{align*}
\sum_{2 \leq k \leq \log n + \frac{\log \log n}{\lambda}} \frac{1}{ \mathbb{E}(Z_{n,k})^{\gamma-1}} & \leq \frac{e^{n (\gamma-1)}}{n^{n(\gamma-1)}}  \frac{n^{n(\gamma-1)}}{(\log n)^{n(\gamma-1)(1-1/\lambda)}}\left(\log n + \frac{\log \log n}{\lambda}\right) \\
& = \left(\frac{e}{(\log n)^{1-1/\lambda}}\right)^{n(\gamma-1)}\biggl(\log n + \frac{\log \log n}{\lambda}\biggr).
\end{align*}
The expression on the right hand side is summable in $n$ which proves (\ref{eq: Z2}) and thus also (\ref{eq: BoundZMax}).
\end{proof}

Now we are ready to finish the proof of upper bounds.

\begin{proof}[Proof of the upper bounds in Theorem \ref{thm:main} for the Pareto case]
%Define $Z_{n,k}$ as in \eqref{eq:number_in_interval} and
%As in Lemma \ref{prop: BoundZMax} define $$ Z_{n,k}=\bigg|\biggl\{\pi \in S_n\bigg| (k-1)n \leq \sum_{i=1}^nY_{i,\pi(i)} <  kn \biggr\}\bigg|.$$ 
Replacing the elements $\xi_{i,j}$ of $A_n$ by $Y_{i,j} = (\log \xi_{i,j}) /\beta$ the i.i.d~case I) corresponds to (A1) case, and the symmetric case II) corresponds to (A2).

Fix an arbitrary $\gamma > 1$.
In the i.i.d.~case Lemmas \ref{lemma: MaxPerm} and \ref{prop: BoundZMax} and Borel-Cantelli lemma imply that almost surely there exists a positive integer $n_0$ such that for all $n \geq n_0$ we have 
\begin{equation}\label{eq:max_bound_for_large_n}
\max_{\pi \in S_n}\sum_{i=1}^nY_{i,\pi(i)} \leq n \log n + \frac{n \log\log n}{2},
\end{equation}
and that for every $1 \leq k \leq \log n + \frac{\log \log n}{2}$ we have
$Z_{n,k} \leq \mathbb{E}(Z_{n,k})^{\gamma}$.
Therefore, for $n$ large enough we have 
\[
\perm A   =   \sum_{\pi \in S_n} e^{\beta \sum_{i=1}^{n}Y_{i,\pi(i)}} \leq \sum_{1 \leq k \leq \log n + \frac{\log \log n}{2}}e^{\beta kn}Z_{n,k}  \leq  \sum_{1 \leq k \leq \log n + \frac{\log \log n}{2}}e^{\beta kn}\mathbb{E}(Z_{n,k})^{\gamma}.
\]
For the symmetric case, Lemma \ref{lemma: MaxPerm-symmetric} shows that for any $\epsilon$, almost surely there is an integer $n_0$ such that for all $n \geq n_0$ we have $Z_{n,k} = Z_{n,k}^{\epsilon}$, for all $1 \leq k \leq \log n + \frac{1}{2}\log \log n$.
Furthermore, by Lemma \ref{prop: BoundZMax} there is (a possibly larger) $n_0$ for which $n\geq n_0$ implies $Z_{n,k} \leq \mathbb{E}(Z_{n,k}^\epsilon)^{\gamma}$, for all $1 \leq k \leq \frac{1}{2}\log n + \log \log n$.
Then we get as before
\[
\perm A   =   \sum_{\pi \in S_n} e^{\beta \sum_{i=1}^{n}Y_{i,\pi(i)}} \leq \sum_{1 \leq k \leq \log n + \frac{\log \log n}{2}}e^{\beta kn}Z_{n,k}  \leq  \sum_{1 \leq k \leq \log n + \frac{\log \log n}{2}}e^{\beta kn}\mathbb{E}(Z_{n,k}^\epsilon)^{\gamma}.
\]
% Lemmas \ref{lemma: MaxPerm-symmetric} and \ref{prop: BoundZMax}, imply that for any $\epsilon > 0$ there exists almost surely $n_0$, such that for every $n \geq n_0$ the bound \eqref{eq:max_bound_for_large_n} holds and that for every $1 \leq k \leq \log n + \frac{\log \log n}{2}$ we have $Z_{n,k} \leq \mathbb{E}(Z_{n,k}^\epsilon)^{\gamma}$.
%($Z_{n,k} \leq \mathbb{E}(Z_{n,k}')^{\gamma}$) %\text{ for each } 1 \leq k \leq \log n + \frac{\log \log n}{2},
%under the assumption (A1) ((A2)).
% \begin{align*}
% & \max_{\pi \in S_n}\sum_{i=1}^nY_{i,\pi(i)} \leq n \log n + \frac{n \log\log n}{2} \ \ {\rm and}  \\
% & Z_{n,k} \leq \mathbb{E}(Z_{n,k})^{\gamma}, \text{ for each } 1 \leq k \leq \log n + \frac{\log \log n}{2},
% \end{align*}
By the upper bounds in Lemmas \ref{lemma:estimates_on_z's} and \ref{lemma:estimates_on_z's-symmetric}, we have that both $\mathbb{E}(Z_{n,k})$ and $\mathbb{E}(Z_{n,k}^\epsilon)$ are bounded from above by $ \Big(Ce^{-k}kn^{1+\epsilon}\log n\Big)^{n}$, for some $C>1$ which does not depend on $n$ and $\epsilon$.
Therefore, in either case the following is true for $n$ large enough
%Using (\ref{eq: upper bound for Z mean}) the following inequalities are almost surely satisfied for $n$ large enough 
\begin{align}
\label{eq: PermEstimate}
\perm A  & \leq    \sum_{1 \leq k \leq \log n + \frac{\log \log n}{2}}e^{\beta kn}\Big(Ce^{-k}kn^{1+\epsilon}\log n\Big)^{\gamma n} \nonumber \\
& \leq C^{\gamma n} n^{\gamma (1+\epsilon)n} (\log n)^{\gamma n} \sum_{1 \leq k \leq \log n + \frac{\log \log n}{2}}k^{\gamma n}e^{(\beta - \gamma) kn} \nonumber \\  
& \leq   C^{\gamma n} n^{\gamma (1+\epsilon) n} (\log n)^{\gamma n} \left(\log n + \frac{\log \log n}{2}\right) \max_{1 \leq \tau \leq \log n+\frac{\log \log n}{2 }}\Big(\tau^{\gamma n}e^{ (\beta - \gamma)\tau n}\Big). \nonumber 
\end{align}
If $\beta > 1$ and $\gamma$ is such that $\beta > \gamma > 1$, $\tau^{\gamma n}e^{ (\beta - \gamma)\tau n}$ is an increasing function in $\tau$.  
Thus the upper bound can be obtained by setting $\tau = \log n + \frac{\log \log n}{2}$ above.
Therefore, for $n$ large enough
$$
\perm A \leq   C^{\gamma n}n^{(\beta + \epsilon \gamma) n} (\log n)^{(\beta + \gamma)n/2} \left(\log n + \frac{\log \log n}{2}\right	)^{\gamma n +1}.
$$
This yields
$$
\frac{\log \perm A}{n \log n} \leq \frac{\gamma \log C}{\log n} + \beta + \epsilon \gamma +\frac{(\beta + \gamma) \log \log n}{2\log n} + \left (\gamma + \frac{1}{n}\right)\frac{\log \left(\log n + \frac{\log \log n}{2}\right)}{\log n}.
$$
Since $\epsilon > 0$ was arbitrary
$$
\limsup_{n \to \infty} \frac{\log \perm A}{n \log n} \leq \beta.
$$

In the case $\beta \leq 1$ we want to maximize the function $ \tau^{\gamma n}e^{(\beta - \gamma)\tau n}$. Write $e^{h(\tau)}:=  \tau^{\gamma n}e^{(\beta - \gamma)\tau n}$. We get
 $$h(\tau)=\gamma n \log \tau + (\beta - \gamma) \tau n, \ \ h'(\tau) = \frac{\gamma n}{\tau} + (\beta - \gamma)n, \ \  h''(\tau)=- \frac{\gamma n}{\tau^2} < 0.$$ Therefore, the function $h$ is concave and the maximum occurs when  $h'(\tau)=0$, that is when $\tau = \frac{\gamma}{\gamma-\beta}$ at which the value of the function $e^{h(\tau)}$ is equal to $\left(\frac{\gamma}{(\gamma - \beta)e}\right)^{\gamma n}$.
Now we get
$$
\perm A \leq C^{\gamma n}n^{\gamma (1+\epsilon)n} (\log n)^{\gamma n}\left(\log n + \frac{\log \log n}{2}\right)\left(\frac{\gamma}{\gamma - \beta}\right)^{\gamma n},
$$
and so by taking logarithm as before
$$
\limsup_n \frac{\log \perm A}{n \log n} \leq \gamma(1+\epsilon). %+ \frac{\log \left(\log n + \frac{\log \log n}{2}\right)}{n \log n} +\gamma\frac{\log \gamma - \log (\gamma - \beta)}{\log n} \to \gamma,
$$
%as $n\to \infty$. 
Since $\gamma >1$ and $\epsilon > 0$ were arbitrary, the claim follows.
% Now we prove that for any $\gamma > 1$, with probability one, the events 
% $$ \left\{\max_{\pi \in S_n} \sum_{i=1}^n Y_{i,\pi(i)} >  n\log n + \frac{n \log \log n}{2}\right\}\ {\rm and }\  \bigcup_{1 \leq k \leq \log n + \frac{\log \log n}{2}}\left\{Z_{n,k} > \mathbb{E}(Z_{n,k})^\gamma\right\}$$
% happen only finitely many times, This will end the proof, since in the case $\beta \leq 1$ we can conclude that almost surely $\limsup_{n \to \infty} \frac{\log \perm A}{n \log n} \leq \gamma$ for any $\gamma > 1$. The claim for the event $\left\{\max_{\pi \in S_n} \sum_{i=1}^n Y_{i,\pi(i)}> n\log n + \frac{n \log \log n}{2}\right\}$ follows from (\ref{eq: MaxPermStrong}). The claim for the event $\bigcup_{1 \leq k \leq \log n + \frac{\log \log n}{2}}\left\{Z_{n,k} > \mathbb{E}(Z_{n,k})^{\gamma}\right\}$ is the content of the next lemma.
\end{proof}

\section{Lower bounds and the proof of Theorem \ref{thm:general_bounds}}\label{sec:lower_bounds}

% First we recall some basic facts about perfect matchings on bipartite graphs. A graph $G=(V,E)$ is called bipartite if the vertex set $V$ can be partitioned into two disjoint non-empty subsets $V_1$ and $V_2$ called vertex classes, such that no two vertices in $V_i$ are connected with an edge, for $i=1,2$. 

% A perfect matching of a graph $G=(V,E)$ is a subset of edges $\tilde{E} \subset E$ such that no two edges $e_1 ,e_2 \in \tilde{E}$  have a vertex in common and such that for any vertex $v \in V$ there is an edge $e \in \tilde{E}$ incident to $v$. Matching theorem proven by \cite{} gives sufficient and necessary conditions for the existence of a perfect matching on bipartite graphs.

In this section we prove  Theorem \ref{thm:general_bounds} as well as the lower bounds in Theorem \ref{thm:main}.
An important ingredient is the use of stochastic domination to reduce certain technical issues to matrices with $0,1$ entries.
The following result proven by Hall \cite{Hall48} and Mann and Ryser in \cite{Mann_Ryser53} provides lower bounds for permanents of such matrices (see also Theorem 1.2 in Chapter 4 of \cite{Minc78}).

\begin{proposition}\label{thm:lower_bounds_for_01}
Let $A$ be an $m \times n$ matrix, $m\leq n$ whose all elements are equal to $0$ or $1$. Assume that each row of $A$ contains at least $k$ elements equal to $1$. If $k \geq m$, then
\begin{equation}\label{eq:lower_bounds_on_01_1}
\perm A \geq \frac{k!}{(k-m)!}.
\end{equation}
If $k < m$ and $\perm A > 0$ then
\begin{equation}\label{eq:lower_bounds_on_01_2}
\perm A \geq k!.
\end{equation}
\end{proposition}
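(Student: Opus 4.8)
The plan is to prove both inequalities together by induction on the number of rows $m$, using the Laplace (cofactor) expansion of the permanent along a row together with Hall's marriage theorem, which guarantees that an $m \times n$ zero--one matrix $B$ ($m\le n$) has $\perm B>0$ precisely when $|N(S)|\ge|S|$ for every $S\subseteq[m]$, where $N(S)$ denotes the set of columns having a $1$ in some row of $S$. The base case $m=1$ is immediate, since then $\perm A$ equals the number of $1$'s in the single row. For the inductive step I would set $J=\{j:a_{1j}=1\}$, so $|J|\ge k$, write $A_{1j}$ for the $(m-1)\times(n-1)$ matrix obtained by deleting row $1$ and column $j$, and use $\perm A=\sum_{j\in J}\perm A_{1j}$; note that every row of every $A_{1j}$ still contains at least $k-1$ ones.

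The case $k\ge m$ needs no positivity hypothesis. Here $k-1\ge m-1$, so each $A_{1j}$ has, in every row, at least as many $1$'s as it has rows; hence Hall's condition holds trivially, $\perm A_{1j}>0$, and the inductive hypothesis gives $\perm A_{1j}\ge (k-1)!/(k-m)!$. Summing over the at least $k$ indices $j\in J$ then yields \eqref{eq:lower_bounds_on_01_1}.

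For the case $k<m$ (where $\perm A>0$ is assumed) I would split into two sub-cases. If $|N(S)|>|S|$ for every nonempty $S\subsetneq[m]$, then for each $j\in J$ and each nonempty $S\subseteq[m]\setminus\{1\}$ one has $|N(S)\setminus\{j\}|\ge|S|$, so $A_{1j}$ satisfies Hall's condition, $\perm A_{1j}>0$, and (since $k-1<m-1$) the inductive hypothesis gives $\perm A_{1j}\ge(k-1)!$; summing over $J$ gives $\perm A\ge k!$. Otherwise I would fix a nonempty $S\subsetneq[m]$ with $|N(S)|=|S|=:s$; since every row of $S$ has all its $1$'s inside $N(S)$ we get $s\ge k$. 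Let $B$ be the $s\times s$ submatrix on rows $S$ and columns $N(S)$, and $C$ the complementary $(m-s)\times(n-s)$ submatrix. Every system of distinct representatives of $A$ must match $S$ bijectively onto $N(S)$, hence restricts to systems of distinct representatives of both $B$ and $C$; in particular $\perm C\ge1$. Conversely, joining any representative system of $B$ with any one of $C$ produces one of $A$, so $\perm A\ge\perm B\cdot\perm C\ge\perm B$. As $B$ is a square zero--one matrix of size $s<m$ with at least $k$ ones per row and $\perm B>0$, the inductive hypothesis (the first case if $s=k$, the second if $s>k$) yields $\perm B\ge k!$, and \eqref{eq:lower_bounds_on_01_2} follows.

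I expect the heart of the matter to be the case $k<m$, specifically the sub-case split. Expanding along an arbitrary row does not in general produce enough positive cofactors: a ``tight'' block of rows $S$ with $|N(S)|=|S|$ can force all but a few of the $\perm A_{1j}$ to vanish, so the naive expansion cannot close the induction. The resolution is the dichotomy above --- either no proper tight block exists, in which case strict Hall makes every cofactor positive and the expansion closes the induction directly, or one exists, in which case the permanent factors as $\perm B\cdot\perm C$ and the induction descends to the strictly smaller square matrix $B$, needing only the crude bound $\perm C\ge1$ on the complementary part (whose row sums may well have dropped to $0$). Once this structural observation is in place, everything that remains is the routine arithmetic of summing $|J|\ge k$ equal lower bounds.
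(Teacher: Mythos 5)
Your argument is correct. Note, however, that the paper does not prove Proposition \ref{thm:lower_bounds_for_01} at all: it is quoted as a known result of Hall \cite{Hall48} and Mann and Ryser \cite{Mann_Ryser53} (see also Minc \cite{Minc78}), so there is no internal proof to compare against. What you wrote is essentially the classical argument from those sources: induction on the number of rows via expansion along a row for \eqref{eq:lower_bounds_on_01_1} and for the ``strict Hall'' sub-case of \eqref{eq:lower_bounds_on_01_2}, together with the critical (tight) block decomposition $\perm A \geq \perm B \cdot \perm C$ with $|N(S)|=|S|\geq k$ in the remaining sub-case --- and that dichotomy is exactly the point that makes the naive row expansion salvageable, as you observe.
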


As discussed in the introduction (see Remark \ref{rem:matchings}) permanents of matrices with $0$, $1$ elements can be viewed as the number of saturated matchings on corresponding bipartite graphs. 
To ensure the positivity of the permanent, when applying  \eqref{eq:lower_bounds_on_01_2}, we will exploit this connection through the classical Hall's marriage theorem, which can be easily stated in this setting (see \cite{Hall35}).

\begin{theorem}\label{thm:positivity_of_01}
Let $G=(V,E)$ be a bipartite graph and let $V = V_1 \cup V_2$ be a decomposition of  the vertex set so that no two vertices in $V_i$ are connected by an edge, $i=1,2$. Assuming $|V_1| \leq |V_2|$, there exists a saturated matching on $G$ if and only if for any subset $W \subset V_1$ we have $|W| \leq |\{v: v\sim w, w \in W\}|$.
\end{theorem}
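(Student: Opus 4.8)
The plan is to prove this classical fact (Hall's marriage theorem) by induction on $m=|V_1|$; I write $N(W)=\{v:v\sim w\text{ for some }w\in W\}$ for $W\subseteq V_1$. The necessity of the condition I would dispatch first: given a saturated matching $\mathcal{M}$, the map sending each $w\in W$ to its $\mathcal{M}$-partner is an injection of $W$ into $N(W)$, so $|W|\le|N(W)|$. For sufficiency I would take $G$ with $|W|\le|N(W)|$ for all $W\subseteq V_1$; the base case $m=1$ is trivial, since $|N(V_1)|\ge 1$ forces the single vertex of $V_1$ to have a neighbour.

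For the inductive step I would distinguish two cases. \emph{Case 1: every proper nonempty $W\subsetneq V_1$ satisfies $|N(W)|\ge|W|+1$.} Then I would pick an arbitrary $w\in V_1$ and a neighbour $v\in N(\{w\})$, delete both vertices, and note that the resulting graph still satisfies Hall's condition — each neighbourhood loses at most the single vertex $v$, so $|N(W)|\ge|W|+1-1=|W|$ for $W\subseteq V_1\setminus\{w\}$; the inductive hypothesis supplies a matching saturating $V_1\setminus\{w\}$, and adjoining $\{w,v\}$ finishes the case. \emph{Case 2: some proper nonempty $W_0\subsetneq V_1$ has $|N(W_0)|=|W_0|$.} Here I would split $G$ into the subgraph $G_1$ induced on $W_0\cup N(W_0)$ and the subgraph $G_2$ induced on $(V_1\setminus W_0)\cup(V_2\setminus N(W_0))$, verify that both inherit Hall's condition, apply the inductive hypothesis to each (legitimate since $|W_0|<m$, $|V_1\setminus W_0|<m$, and one checks $|V_1\setminus W_0|\le|V_2\setminus N(W_0)|$ using $|N(W_0)|=|W_0|$), and observe that the union of the two resulting matchings saturates $V_1$ because $G_1$ and $G_2$ are vertex-disjoint.

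The main obstacle — really the only non-mechanical point — is verifying in Case 2 that $G_2$ inherits Hall's condition. For $W''\subseteq V_1\setminus W_0$ one has $N_{G_2}(W'')=N_G(W'')\setminus N(W_0)$, so
\[
|N_{G_2}(W'')|=|N_G(W'')\cup N(W_0)|-|N(W_0)|=|N_G(W''\cup W_0)|-|W_0|\ge|W''\cup W_0|-|W_0|=|W''|,
\]
where the middle equality uses $N_G(W'')\cup N(W_0)=N_G(W''\cup W_0)$ and $|N(W_0)|=|W_0|$, and the last uses $W''\cap W_0=\emptyset$. The inheritance for $G_1$ is easier, since $N_{G_1}(W')=N_G(W')$ whenever $W'\subseteq W_0$. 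An alternative I would keep in reserve is Berge's augmenting-path argument — take a maximum matching, and if it misses some $w_0\in V_1$, grow an alternating tree from $w_0$; maximality rules out an augmenting path, forcing the $V_1$-vertices of the tree to violate Hall's condition — but this needs the machinery of alternating trees, so I would present the induction above.
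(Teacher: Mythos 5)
Your proof is correct, but note that the paper does not prove this statement at all: it is quoted as the classical Hall marriage theorem and dispatched with a citation to Hall's 1935 paper, so there is no "paper proof" to match. What you supply is the standard Halmos--Vaughan induction on $|V_1|$, and all the delicate points are handled properly: the necessity direction via the injection $w \mapsto$ ($\mathcal{M}$-partner of $w$); the surplus case, where deleting the matched pair $\{w,v\}$ costs each neighbourhood at most one vertex; and the tight-set case, where your displayed computation $|N_{G_2}(W'')| = |N_G(W''\cup W_0)| - |W_0| \geq |W''|$ is exactly the right way to see that $G_2$ inherits Hall's condition, and you also remember to check the side condition $|V_1\setminus W_0| \leq |V_2\setminus N(W_0)|$ needed to invoke the inductive hypothesis in the form stated (with the smaller side listed first). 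The resulting matching is saturated in the paper's sense (every $V_1$-vertex covered, every vertex covered at most once), so the argument fully establishes the theorem as stated; the only thing your write-up adds beyond what the paper needs is the reserve augmenting-path argument, which you rightly set aside.
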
 

% If $B = (b)_{i,j=1}^n$ is an $n \times n$ matrix whose elements are either $0$ and $1$ one can construct a bipartite graph with th eset of vertices $\{v_1, \dots v_n\} \cup \{w_1, \dots w_n\}$ so that $v_i$ and $w_j$ are connected with an edge if and only if $b_{ij}=1$. Observe that any perfect matching of this graph corresponds to a permuation $\pi \in S_n$ such that $b_{i\pi(i)}=1$. In particular the number of perfect matchings is equal to $\perm B$. The following lemma is then a simple corollary of the Matching Theorem.
Restating the above theorem in terms of permanents of $0$, $1$ matrices yields the following lemma. 

\begin{lemma}\label{lemma:permanents_and_matchings}
Let $B$ be an $m \times n$ matrix whose all elements are either $0$ or $1$. If for any $1 \leq k \leq m$ any $k \times (n-k+1)$ submatrix of $B$ has at least one element equal to $1$, then $\perm B \geq 1$.
\end{lemma}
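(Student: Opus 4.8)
The plan is to translate the hypothesis on the matrix $B$ into the condition of Hall's marriage theorem (Theorem \ref{thm:positivity_of_01}) applied to the associated bipartite graph, and then conclude via Remark \ref{rem:matchings} that $\perm B$ equals the number of saturated matchings, which is therefore at least $1$. So first I would set up the bipartite graph $G=(V,E)$ with $V_1=\{v_1,\dots,v_m\}$, $V_2=\{w_1,\dots,w_n\}$, and an edge $v_i\sim w_j$ precisely when $b_{ij}=1$; note $|V_1|=m\le n=|V_2|$.

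Next, the core step is verifying Hall's condition: for every $W\subset V_1$ we must show $|W|\le |N(W)|$, where $N(W)=\{w\in V_2: w\sim v \text{ for some } v\in W\}$. I would argue by contraposition. Suppose some $W$ violates this, so $|N(W)| \le |W|-1$. Let $k=|W|$, so $1\le k\le m$, and let $J\subset[n]$ be the column indices \emph{not} in $N(W)$; then $|J| \ge n-(k-1) = n-k+1$. Restricting $B$ to the rows indexed by $W$ and any $n-k+1$ columns chosen from $J$ gives a $k\times(n-k+1)$ submatrix all of whose entries are $0$ (since none of those columns lies in $N(W)$, no row in $W$ has a $1$ there). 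This contradicts the hypothesis that every such submatrix has at least one $1$. Hence Hall's condition holds.

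Finally, Theorem \ref{thm:positivity_of_01} gives a saturated matching on $G$, and by the correspondence recalled in Remark \ref{rem:matchings}, $\perm B$ (the number of saturated matchings, since all weights are $0$ or $1$) is at least $1$.

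I do not expect any genuine obstacle here; the only thing requiring a little care is the bookkeeping with the complement — making sure $n-(k-1)=n-k+1$ columns genuinely avoid $N(W)$ and that $k$ ranges over $1\le k\le m$ exactly as in the statement. Everything else is a direct invocation of Hall's theorem and the matching interpretation of the permanent.
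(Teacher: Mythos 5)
Your proof is correct and is exactly the argument the paper has in mind: the paper presents the lemma as a direct restatement of Hall's marriage theorem (Theorem \ref{thm:positivity_of_01}) via the matching interpretation of Remark \ref{rem:matchings}, and your contrapositive verification of Hall's condition (a violating set $W$ of size $k$ forces an all-zero $k\times(n-k+1)$ submatrix) is the intended bookkeeping, carried out correctly.
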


% \begin{proof}
% Consider a bipartite graph $G$ whose vertices are $v_1, \dots, v_n, w_1, \dots w_n$ and such that $v_i$ and $w_j$ are connected if an only if $ij$-th element of the matrix $B$ is equal to $1$. It is easy to see that this graph satisfies the conditions of Theorem \ref{}. Since $\perm B$ is equal to the number of perfect matchings in $G$, Theorem \ref{} yields the claim.
% \end{proof}

%calculations we will use the above lemma to ensure positivity of our random matrices and its submatrices of linear size. 

All the necessary applications of Proposition \ref{thm:lower_bounds_for_01} and Lemma \ref{lemma:permanents_and_matchings} are summarized in Lemma \ref{lemma:lower_bound_for_submatrices} which, in particular, proves the lower bounds in Theorem \ref{thm:general_bounds}.

\begin{remark}\label{ex:stirling}
Recall that Stirling's formula says that
\[
\lim_{n \to \infty} n!e^nn^{-(n+1/2)} = \sqrt{2\pi}.
\]
In particular there are constants $c_1< c_2$ so that for any $n$ and $1 \leq k \leq n-1$ 
\begin{equation}\label{eq:stirling_for_binom}
c_1 \frac{n^{n+1/2}}{k^{k+1/2}(n-k)^{n-k+1/2}} \leq \binom{n}{k} \leq c_2 \frac{n^{n+1/2}}{k^{k+1/2}(n-k)^{n-k+1/2}}.
\end{equation}
%We will use these inequalities in this and the following section.
\end{remark}

\begin{lemma}\label{lemma:lower_bound_for_submatrices}
Assume that a sequence of random matrices $A_n$ with positive entries satisfies either I) or II).
%Let $\xi$ be a positive random variable and let $A_n$ be a sequence of $n \times n$ matrices whose elements are independent and identically distributed as $\xi$. 
For any $0 < \alpha < 1$ and any $\delta > 0$ there exists $r>0$ with the following property: Almost surely there exists $n_0$ such that for any $n \geq n_0$ and any $\alpha n \leq k \leq n$, any $k \times k$ submatrix $B$ of $A_n$ satisfies $\perm B \geq r^k k^{(1-\delta)k}$.
\end{lemma}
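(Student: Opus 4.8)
The plan is to reduce the lower bound for a general positive variable $\xi$ to the case of a $0$-$1$ matrix via the stochastic domination device of Remark \ref{rem:stochastic_domination}. Fix $\alpha$ and $\delta$. Since $\xi>0$ almost surely, there is a threshold $t_0>0$ with $p:=\mathbb{P}(\xi\ge t_0)>0$; then $\xi$ stochastically dominates $t_0\,\eta$ where $\eta$ is a Bernoulli-type variable taking value $1$ with probability $p$ and $0$ otherwise. By Remark \ref{rem:stochastic_domination}, after enlarging the probability space, we may couple so that $\perm B \ge t_0^{k}\,\perm B'$ for every $k\times k$ submatrix $B$ of $A_n$, where $B'$ is the corresponding submatrix of a $0$-$1$ matrix with iid Bernoulli$(p)$ entries. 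Using the scaling $\perm(\lambda A)=\lambda^m\perm A$ this also explains where the constant $r$ will come from: it will be $t_0$ times a constant coming from the Bernoulli analysis. So it suffices to prove the statement when $\xi$ is Bernoulli$(p)$, $p\in(0,1)$ fixed, with the bound $\perm B \ge c^{k}k^{(1-\delta)k}$.

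Next I would run a union bound over all relevant submatrices combined with Proposition \ref{thm:lower_bounds_for_01}. There are at most $\binom{n}{k}^2 \le 4^n$ choices of $k\times k$ submatrix for each $k$, and at most $n$ values of $k\in[\alpha n,n]$, so the total number of submatrices to control is at most $n\,4^n$, which is $e^{O(n)}$. For a fixed $k\times k$ submatrix $B'$ with iid Bernoulli$(p)$ entries, I want to show that with very high probability (failure probability much smaller than $e^{-Cn}$ for a large $C$) we have $\perm B' \ge k!/(k-\ell)!$ for a suitable $\ell=\ell(k)$ slightly below $k$ — concretely $\ell = k - \lceil \delta k/2\rceil$ or so. By Proposition \ref{thm:lower_bounds_for_01}\,\eqref{eq:lower_bounds_on_01_1}, it is enough that every row of $B'$ contains at least $\ell$ ones, and $\ell\le k$. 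Since the number of ones in a given row of $B'$ is Binomial$(k,p)$, and $\ell \le k$, the event ``some row has fewer than $\ell$ ones'' has probability at most $k\,\mathbb{P}(\mathrm{Bin}(k,p) < \ell)$. Choosing $\ell = \lfloor (1-\delta/2) k\rfloor$ and using a standard Chernoff bound for the binomial lower tail, this is at most $k\,e^{-c'k}$ provided $1-\delta/2 < p$; if $p$ is small this fails, so instead I would first use monotonicity in $p$ (Remark \ref{rem:stochastic_domination} again, now in the reverse direction) only after noting we are free to take $p$ as large as we like by choosing $t_0$ small — wait, $p=\mathbb P(\xi\ge t_0)\to 1$ as $t_0\downarrow 0$, so we simply pick $t_0$ small enough that $p > 1-\delta/2$, which is legitimate since the target bound for $\xi$ only needs $r$ depending on $\alpha,\delta$. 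With $p>1-\delta/2$ fixed this way, the Chernoff bound gives failure probability $\le k\,e^{-c'k} \le n\,e^{-c'\alpha n}$ per submatrix.

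Then I would assemble the Borel--Cantelli argument. For each $n$, the probability that some $k\times k$ submatrix ($\alpha n\le k\le n$) of the Bernoulli matrix fails to have all rows with at least $\ell(k)=\lfloor(1-\delta/2)k\rfloor$ ones is at most $n\cdot 4^n\cdot n\, e^{-c'\alpha n}$, which is summable in $n$ once $c'\alpha > \log 4$; this last condition can be guaranteed because $c'$ can be made large by taking $p$ even closer to $1$ (the Chernoff exponent for the lower tail below $1-\delta/2$ grows as $p\to1$), i.e.\ by taking $t_0$ smaller still. Hence almost surely there is $n_0$ such that for all $n\ge n_0$, all $\alpha n\le k\le n$, and all $k\times k$ submatrices $B'$, every row has $\ge \ell(k)$ ones, so by \eqref{eq:lower_bounds_on_01_1}
\[
\perm B' \ \ge\ \frac{k!}{(k-\ell)!}\ \ge\ (k-\ell)!^{-1}\,k! \ \ge\ \Big(\tfrac{k}{e}\Big)^{k}\Big/(\delta k/2)!\, ,
\]
and using Stirling (Remark \ref{ex:stirling}) one checks $\frac{k!}{(k-\ell)!} \ge (\ell)^{\,\ell - k}\cdot\text{(stuff)} \ge c^{k} k^{(1-\delta)k}$ for a constant $c=c(\delta)>0$: indeed $\frac{k!}{(k-\ell)!}$ is a product of $\ell$ terms each at least $k-\ell \ge \delta k/3 \ge (\delta/3)k$, no—one should instead write $\frac{k!}{(k-\ell)!}\ge ((k-\ell)+1)((k-\ell)+2)\cdots k \ge (k/2)^{\ell - \text{const}}$ type bound; more cleanly, by \eqref{eq:stirling_for_binom}, $\frac{k!}{(k-\ell)!} \ge c_1' \frac{k^{k+1/2}e^{k-\ell}}{e^k (k-\ell)^{k-\ell+1/2}} \ge k^{(1-\delta/2)k} e^{-k}(k-\ell)^{-(k-\ell)}$, and since $k-\ell \le \delta k/2$ we get $(k-\ell)^{-(k-\ell)} \ge (\delta k/2)^{-\delta k/2} \ge k^{-\delta k/2}(\delta/2)^{-\delta k/2}$, so altogether $\frac{k!}{(k-\ell)!}\ge k^{(1-\delta)k}\,(e^{-1}(\delta/2)^{-\delta/2})^{k}$, which is of the required form with $c = e^{-1}(\delta/2)^{-\delta/2}$. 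Combining with $\perm B \ge t_0^k \perm B'$ gives $\perm B \ge r^k k^{(1-\delta)k}$ with $r = t_0 c$. The main obstacle is the bookkeeping to make the three smallness requirements ($p>1-\delta/2$, Chernoff exponent large enough to beat the $4^n$ union bound, and the Stirling estimate producing exactly the exponent $(1-\delta)k$) mutually compatible; since all three are achieved by taking the single parameter $t_0$ small, there is no genuine conflict, but one must present the choices in the right order (fix $\delta$, then $\ell(k)$, then $p$ hence $t_0$, then verify summability).
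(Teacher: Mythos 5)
Your reduction to a $0$--$1$ matrix via thresholding and stochastic domination, and your final Stirling computation, match the paper's strategy. But there is a genuine gap at the central step: you invoke Proposition \ref{thm:lower_bounds_for_01}\,\eqref{eq:lower_bounds_on_01_1} for a $k\times k$ matrix each of whose rows has at least $\ell(k)=\lfloor(1-\delta/2)k\rfloor$ ones, yet \eqref{eq:lower_bounds_on_01_1} applies only when the guaranteed number of ones per row is at least the \emph{height} of the matrix; here $\ell(k)<k$, so you are in the case \eqref{eq:lower_bounds_on_01_2}, which carries the extra hypothesis $\perm B'>0$. That hypothesis is not implied by the row counts you control: for instance, if every row of $B'$ has its zeros in the same set of $\lceil\delta k/2\rceil$ columns, then each row still has at least $\ell(k)$ ones but $\perm B'=0$, and your argument then yields no lower bound at all. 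So the positivity of the permanent of \emph{every} $k\times k$ submatrix with $k\geq\alpha n$ is a separate event that must be shown to hold eventually almost surely, and your proposal contains no mechanism for it.

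This positivity step is exactly what the paper spends most of its proof on: it applies Hall's marriage theorem in the form of Lemma \ref{lemma:permanents_and_matchings}, which reduces positivity to the absence of large all-zero rectangles, introduces the event $\mathfrak{C}_n$ that some $k_1\times k_2$ submatrix with $k_1+k_2\geq\alpha n$ of the thresholded matrix is identically zero, and proves $\sum_n\mathbb{P}(\mathfrak{C}_n)<\infty$ via the estimate \eqref{eq:large_deviations_for _rectangles}; the row-count event (the paper's $\mathfrak{B}_n$) is the easy half. Your remaining ingredients are fine and even simpler in places --- controlling rows of all submatrices at once by a $4^n$ union bound works, since the Chernoff exponent can indeed be inflated by pushing $p=\mathbb{P}(\xi\geq t_0)$ close to $1$ (the paper avoids this by noting that if no row of the full matrix has more than $\alpha\delta n$ zeros, then every row of every relevant submatrix automatically has at least $(1-\delta)k$ ones). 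A repair within your framework would be to control column counts of the submatrices as well and deduce Hall's condition from the fact that every row and column of a $k\times k$ $0$--$1$ matrix having more than $k/2$ ones forces a perfect matching; but as written, the missing positivity argument is a real gap, not a bookkeeping issue.
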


\begin{proof}
Let $q>0$ be such that $\mathbb{P}(\xi \leq q) < \eta$, where $\eta$ is to be chosen later.
 Define the random variable $\tilde{\xi} = \mathbf{1}_{(\xi \geq q)}$, and define the matrix $\tilde{A}_n = (\tilde{\xi}_{ij})$. 
%Since $a_{ij} \geq \gamma\tilde{a}_{ij}$ we have $\perm A \geq \gamma^n \perm \tilde{A}$.
Let $\mathfrak{B}_n$ denote the event that  some row of $\tilde{A}_n$ contains more than $\alpha \delta n $ zeros and let $\mathfrak{C}_n$ denote the event that for some $k_1$ and $k_2$ satisfying $\alpha n \leq k_1 + k_2$ there exists a $k_1 \times k_2$ submatrix of $\tilde{A}_n$ containing only zeros. By Lemma \ref{lemma:permanents_and_matchings} on the event $\frak{C}_n^c$ any $k\times k$ submatrix of $\tilde{A}_n$ has a positive permanent, for $\alpha n \leq k \leq n$. Furthermore on the event $\frak{B}_n^c$ every row of every $k \times k$ submatrix of $\tilde{A}_n$ for $k \geq \alpha n$ contains at least $(1-\delta)k$ ones.
Thus,  on the event $\frak{B}_n^c \cap \frak{C}_n^c$ by  \eqref{eq:lower_bounds_on_01_2} we have for any $k \geq \alpha n$ and any $k \times k$ submatrix $B$ of $A_n$
\[
\perm B \geq q^k \perm \tilde{B} \geq q^k \lfloor (1-\delta)k\rfloor! \geq \Big(\frac{q(1-\delta)}{e}\Big)^kk^{(1-\delta)k},
\]
where $\tilde{B}$ is the submatrix of $\tilde{A}_n$ having the same rows and columns as $B$ in $A_n$.
Note that the last inequality above holds for $n$ large enough by Stirling's approximation. 

Thus we only need to prove that the probabilities of the events $\frak{B}_n \cup \frak{C}_n$ are summable (since then they happen only finitely many times almost surely). To end this observe that the average number of $1$s in every row and column of $\tilde{A}_n$ is greater than $n(1-\eta)$, so for $\eta < \alpha \delta$ by standard large deviation arguments there exists a constant $C$ (depending on $\eta$) such that $\mathbb{P}(\frak{B}_n) \leq Cne^{-n/C}$, which is clearly summable. 
For $\frak{C}_n$ use the union bound and observe that for fixed $k_1 \times k_2$ matrix, the probability that it contains only zeros is bounded from above by $\eta^{k_1k_2}$ in the i.i.d.~case and $\eta^{k_1k_2/2}$ in the symmetric case (since we can always extract at least $k_1k_2/2$ independent elements). 
Therefore, for $\eta$ small enough and $n$ large enough
\begin{multline}\label{eq:large_deviations_for _rectangles}
\mathbb{P}(\frak{C}_n) \leq 2\sum_{\alpha n \leq k_1+k_2 \leq n \atop k_1 \geq k_2}\binom{n}{k_1}\binom{n}{k_2}\eta^{k_1k_2/2} \leq  2\sum_{\alpha n/2 \leq k_1 \leq n} \binom{n}{k_1} \sum_{1\leq k_2 \leq n}\binom{n}{k_2}\eta^{k_1k_2/2}  \\ 
\leq 2\sum_{\alpha n/2 \leq k_1 \leq n} \binom{n}{k_1} \Big(\Big(1+\eta^{k_1/2}\Big)^n-1\Big) \leq 2\sum_{\alpha n/2 \leq k_1 \leq n} \binom{n}{k_1} (2\eta^{1/2})^{k_1}.
\end{multline}
To check the last inequality simply observe that for $\eta$ small enough and $n$ large enough
\[
\eta^{-k_1/2}\Big(\big(1+\eta^{k_1/2}\big)^n-1\Big) =  \sum_{\ell=0}^{n-1}\big(1+\eta^{k_1/2}\big)^\ell \leq n\big(1+\eta^{\alpha n/4}\big)^n \leq 2^{\alpha n/2} \leq 2^{k_1}.
\] 
%and bound each of the terms on the right hand side.
%To prove that the right hand side is summable in $n$ observe that for $f(t) = t\log t + (1-t)\log (1-t)$ and  $t < e^{-2}$ we have $f'(s) = 2+\log s + \log (1-s) <0$ for $0 < s < t$ and since $\lim_{s \downarrow 0} f(s) =0$ we have $f(t) <0$. Since $\delta<\alpha/4$ we can take $t=\sqrt{\alpha \delta} < \alpha/2 < e^{-2}$ which yields
To prove that the right hand side of \eqref{eq:large_deviations_for _rectangles} is summable, observe that for $\eta=\eta(\alpha)$ sufficiently small the following inequalities hold for $\alpha n/2 \leq k_1 \leq n$
\[
(2\eta^{1/2})^{k_1/2} \leq (2\eta^{1/2})^{\alpha n/4} \leq (1-2^{1/2}\eta^{1/4})^{(1-\alpha/2)n} \leq  (1-2^{1/2}\eta^{1/4})^{n-k_1}.
\]
Plugging this back into \eqref{eq:large_deviations_for _rectangles} we get
\[
\mathbb{P}(\frak{C}_n) \leq 2 \sum_{\alpha n/2 \leq k_1 \leq n} \binom{n}{k_1} (2\eta^{1/2})^{k_1/2}(1-2^{1/2}\eta^{1/4})^{n-k_1}.
\]
The right hand side is just twice the probability that the Binomial random variable with parameters $n$ and $2^{1/2}\eta^{1/4}$,  is greater than $\alpha n/2$. Choosing $2^{1/2}\eta^{1/4} < \alpha /2$, large deviation principle implies that this probability is exponentially small, and thus summable in $n$. This finishes the proof.
\end{proof}

% \begin{remark}\label{rem:bounding_the_whole}
% Returning to Remark \ref{rem:mass_at_zero}, it is quite obvious that Lemma \ref{lemma:lower_bound_for_submatrices} fails if we allow $\xi$ to have a point mass at zero.
% However, following the same arguments one can show that there is $\delta > 0$ such that $\perm A_n \geq n^{\delta n}$, for $n$ large enough.
% Simply take $r>0$ such that $p = \mathbb{P}(\xi > r) > 3\delta$ and use large deviations to show that for $n$ large enough, every row of $A_n$ contains at least $2\delta n$ elements larger than $r$.
% Apply \eqref{eq:lower_bounds_on_01_2}, which gives the desired bound for $n$ large enough, one needs to show that for $n$ large enough every $k \times (n-k+1)$ submatrix of $A_n$ contains at least one element larger than $r$.
% The probability that this fails is at most
% \[
% \sum_{k=1}^n \binom{n}{k}\binom{n}{n-k+1}p^{k(n-k+1)} = \binom{2n}{n+1}p^{n+1},
% \]
% which is summable in $n$.
% This finishes the argument.
% \end{remark}

\begin{proof}[Proof of Theorem \ref{thm:general_bounds}]
i) Taking an arbitrary $\delta > 0$ by Lemma  \ref{lemma:lower_bound_for_submatrices} we can find $r > 0$ small enough so that almost surely for $n$ large enough
\[
\frac{\log \perm B}{k \log k} \geq \frac{\log r}{\log k} + (1-\delta),
\]
for any $\alpha n \leq k \leq n$ and any $k \times k$ submatrix $B$ of $A_n$. Thus almost surely
\[
\liminf_n \min_{B,k}\frac{\log \perm B}{k \log k} \geq 1-\delta.
\]
Since $\delta > 0$ was arbitrary the claim follows.

ii) 
Let $\tilde{\xi}$ be parameter 1 Pareto distributed random variable.
By Markov inequality, for all $t \geq \mathbb{E}(\xi)$ we have
\[
\mathbb{P}(\xi \geq t) \leq \frac{\mathbb{E}(\xi)}{t} = \mathbb{P}(\mathbb{E}(\xi)\tilde{\xi} \geq t),
\]
Thus  $\mathbb{E}(\xi) \tilde{\xi}$ stochastically dominates $\xi$ from above, and by Remark \ref{rem:stochastic_domination} we can construct  a sequence $(\tilde{A}_n)$ of  random $n \times n$ matrices of the same type as $(A_n)$  (i.i.d.~or symmetric) whose elements are  distributed as $\tilde{\xi}$ and such that the elements of $\mathbb{E}(\xi)\tilde{A}_n$ dominate the elements of $A_n$ pointwise. 
In particular,  for any $\alpha n \leq k \leq n$ and any $k \times k$ submatrix $B$ of $A_n$, for the corresponding submatrix $\tilde{B}$ of $\tilde{A}_n$ we have $\perm B \leq \mathbb{E}(\xi)^k \perm \tilde{B}$. Thus it is enough to prove the claim in the case when we replace $\xi$ with $\tilde{\xi}$. 
Let $\tilde{B}$ be an arbitrary $k \times k$ submatrix of $\tilde{A}_n$ 
%then after permuting the rows and columns we can assume that $\tilde{B}$ is at the intersection of the first $k$ rows and columns.  Denote 
and by $\tilde{B}^c$ the matrix at the intersection of the other $n-k$ rows and columns.
Observe that since all the elements are larger than 1 we have $\perm \tilde{B}^c \geq (n-k)!$ and  \[\perm \tilde{A}_n \geq \perm \tilde{B} \perm \tilde{B}^c \geq (n-k)!\perm \tilde{B}.\] %Furthermore since all the elements are larger than $1$ we have $\perm \tilde{B}^c \geq (n-k)!$ and thus
Therefore,
\[
\frac{\perm \tilde{B}}{k \log k} \leq \frac{\perm \tilde{A}_n}{k \log k} - \frac{\log(n-k)!}{k \log k} 
\leq \frac{\perm \tilde{A}_n}{k \log k} - \frac{(n-k)(\log(n-k) -1) - c}{k \log k},
\]
for some $c> 0$,
where the second inequality inequality follows from Stirling's formula (note that we can assume that $k<n$, since for $k=n$ the upper bounds have been proven in the  previous section). By the upper bounds in Theorem \ref{thm:main}, for any $\epsilon >0$ almost surely there is $n_0$ such that for all $n \geq n_0$ we have 
$\perm \tilde{A}_n \leq (1+\epsilon) n\log n$. For such $n$ we have
\begin{equation}\label{eq:useful_upper_bound}
\frac{\perm \tilde{B}}{k \log k} \leq (1+\epsilon)\frac{n \log n}{k \log k} - \frac{(n-k)\log(n-k)}{k \log k} + \frac{n-k}{k \log k} + \frac{c}{k \log k}
%\\ \leq \frac{1}{\alpha}\Big(\frac{(1+\epsilon)\log n}{\log n + \log \alpha} -1\Big) + \frac{n-k}{k \log k}   + \frac{c}{k \log k}
%+ \frac{n}{k} -\Big(\frac{n}{k}-1\Big)\frac{\log (n-k)}{\log k}.
\end{equation}
Since $k \geq \alpha n$ the last two terms on the right hand side vanish in the limit. 
Removing these two terms, the rest of the right hand side of \eqref{eq:useful_upper_bound} can be bounded from above by
\[
%(1+\epsilon)\frac{n \log n}{k \log k} - \frac{(n-k)\log(n-k)}{k \log k} \leq 
\frac{n}{k}\Big((1+\epsilon)\frac{\log n}{\log k} - 1\Big) + \frac{n}{k} -\Big(\frac{n}{k}-1\Big)\frac{\log (n-k)}{\log k}.
%\frac{1}{\alpha}\Big(\frac{(1+\epsilon)\log n}{\log n + \log \alpha} -1\Big) + \frac{n}{k} -\Big(\frac{n}{k}-1\Big)\frac{\log (n-k)}{\log k}
%\frac{n}{k} \times \frac{(1+\epsilon)\log n - \log(n-k)}{\log k} + \frac{\log(n-k)}{\log k}
\]
The first term above is positive and bounded from above by 
\[
\frac{1}{\alpha} \Big(
\epsilon  + \frac{(1+\epsilon)\log(1/\alpha)}{\log k}
\Big).
\]
Since $\epsilon >0$ is arbitrary,
%For a fixed $\alpha$ the first three terms on the right hand side vanish in the limit and 
it suffices to show that
\begin{equation}\label{eq:useful_lowerbound}
\limsup_{n \to \infty}\max_{\alpha n \leq k < n}\Big(\frac{n}{k} -\Big(\frac{n}{k}-1\Big)\frac{\log (n-k)}{\log k}\Big) \leq 1.
\end{equation}
Denoting $n=tk$, where $t > 1$, and using that $s\log s \geq -e^{-1}$ for all $s>0$ we have
\[
\frac{n}{k} -\Big(\frac{n}{k}-1\Big)\frac{\log (n-k)}{\log k} = 1-\frac{(t-1)\log(t-1)}{\log k} \leq 1+\frac{1}{e\log k},
\]
which yields \eqref{eq:useful_lowerbound}.
%When taking supremum one can assume that $1 < t < 2$, that is $k > n/2$. In that case 
%The claim follows from the fact that $s \mapsto s \log s$ is bounded from below on $(0,\infty)$.
\end{proof}

\begin{proof}[Proof of the lower bounds in Theorem \ref{thm:main}]
The lower bounds for $\beta \leq 1$ follow from Theorem \ref{thm:general_bounds} i), so in the rest of the proof we will assume that $\beta > 1$.

 First define the random variable $Y =(\log \xi)/\beta$ and observe that 
\[
\lim_{t \to \infty}\frac{\log \mathbb{P}(Y \geq t)}{t} = -1,
\] 
and thus for any $\epsilon >0$ we have $\mathbb{P}(Y \geq t) \geq \exp(-t(1+\epsilon))$, for $t$ large enough. 
If $Y_1, \dots, Y_n$ are independent and distributed as $Y$ and $Q = \max_{1 \leq i \leq n} Y_i$, then for any $t>0$ and $n$ large enough 
\begin{equation}\label{eq:bound_on_Q_general}
\mathbb{P}(Q \leq t\log n) \leq \Big(1 - e^{-(1+\epsilon)t\log n}\Big)^n  =  \Big(1 - n^{-(1+\epsilon)t}\Big)^n \leq \exp(-n^{1-(1+\epsilon)t}).
\end{equation}
Let $(Y_{i,j})$ be an array of random variables distributed as $Y$, whose elements are either i.i.d.~or symmetric, depending on the type of $A$.
It suffices to show that for any  $\epsilon > 0$, almost surely for all $n$ large enough, one can find $(1-2\epsilon)n \leq k \leq (1-\epsilon )n$ and indices $i_1, \dots, i_k$ and $j_1, \dots, j_k$ such that 
\begin{equation}\label{eq:greedy_sufficient}
\sum_{\ell = 1}^k Y_{i_\ell,j_\ell} \geq (1-4\epsilon)n\log n.
\end{equation}
To see this, denote by $B$ the submatrix at the intersection of rows $\{i_1, \dots, i_k\}$ and columns $\{j_1, \dots, j_k\}$ and by $B^c$ the submatrix at the intersection of rows $\{i_1, \dots, i_k\}^c$ and columns $\{j_1, \dots, j_k\}^c$.
For $n$ large enough we then have by \eqref{eq:greedy_sufficient}
\[
\perm B \geq \exp\Big(\beta \sum_{\ell = 1}^k Y_{i_\ell,j_\ell}\Big) \geq n^{(1-4\epsilon)\beta n},
\]
and by Lemma \ref{lemma:lower_bound_for_submatrices}
\[
\perm B^c \geq r^{n-k}(n-k)^{(1-\epsilon)(n-k)} \geq r^{n \epsilon}(n \epsilon)^{n \epsilon},
\]
for some $r>0$ not depending on $n$.
Since $\epsilon > 0$ is arbitrary, the claim follows from 
\[
 \perm A \geq  \perm B  \perm B^c \geq n^{(\beta-4\epsilon\beta + \epsilon) n} (\epsilon r)^{\epsilon n}.
\]
The rest of the proof is devoted to showing the existence of indices $i_1, \dots, i_k$ and $j_1, \dots, j_k$ which yield \eqref{eq:greedy_sufficient}.

To extract these elements, we will run a greedy algorithm.
In the i.i.d.~case, we will have $i_\ell = \ell$, for all $1 \leq \ell \leq k$. 
We start by taking $j_1$ to be a coordinate of the largest element $Q_1$ in the first row, that is $1 \leq j_1 \leq n$ is such that $Q_1 := Y_{1,j_1} \geq Y_{1,\ell}$, for all $1 \leq \ell \leq n$.
Having constructed $j_1, \dots , j_m$, set $j_{m+1}$ to be a coordinate of the largest admissible element $Q_{m+1}$ in the $m+1$-st row, that is $j_{m+1} \in \{1,\dots,n\} \setminus \{j_1,\dots,j_m\}$ is such that 
\[
Q_{m+1} := Y_{m+1,j_{m+1}} \geq Y_{m+1,\ell}, \text{ for all } \ell \in \{1,\dots,n\} \setminus\{j_1, \dots, j_m\}.
\]
Note that, since in each row the location and the value of the maximum are independent, conditioned on the values of $j_1, \dots, j_m$, elements $Y_{m+1,\ell}$, for $\ell \in \{1,\dots,  n\} \setminus \{j_1, \dots , j_m\}$ are independent and distributed as $Y$.
Therefore, $Q_1, \dots, Q_k$ are independent, with $Q_i$ distributed as a maximum of $n-i+1$ i.i.d.~random variables distributed as $Y$.
Here we take $k$ to be any index such that $(1-2\epsilon)n \leq k \leq (1-\epsilon)n$.
In the i.i.d.~case, to prove \eqref{eq:greedy_sufficient} it suffices to show that almost surely 
\begin{equation}\label{greedy_sufficient_iid}
\sum_{i=1}^k Q_i \geq (1-4\epsilon)n \log n,
\end{equation}
holds for $n$ large enough.

To finish the proof in the i.i.d.~case, for a given $\epsilon > 0$ take $n$ large enough so that
\begin{equation}\label{eq:choice_of_n}
(1-4\epsilon)n\log n\leq (1-2\epsilon)^2n\log(n\epsilon).
\end{equation}
Then it is a simple observation that if $\sum_{i=1}^k Q_i < (1-4\epsilon)n \log n$ then we have some $1 \leq r \leq k$ such that 
\[Q_{r} \leq (1-2\epsilon)\log(n\epsilon) \leq (1-2\epsilon)\log(n-r+1).\]
Thus by \eqref{eq:bound_on_Q_general}, for $n$ large enough
\[
\mathbb{P}\Big(\sum_{i=1}^k Q_i < (1-4\epsilon)n \log n\Big) \leq \sum_{1 \leq r\leq (1-\epsilon) n} \exp(-(n-r+1)^{1-(1+\epsilon)(1-2\epsilon)}) \leq n \exp(-(\epsilon n)^{\epsilon + 2\epsilon^2}).
\]
Since the right hand side is summable in $n$, the claim in \eqref{greedy_sufficient_iid} follows.

In the symmetric case we modify the algorithm to extract only the elements strictly above the diagonal. 
Then we reflect the selected elements over the diagonal, to make it appear twice in the sum.
In other words, if $Y_{i,j}$ appears in the sum on the left hand side of \eqref{eq:greedy_sufficient}, so does $Y_{j,i} = Y_{i,j}$.
So set $i_1=1$ and let $j_1$ be a coordinate of the largest element $Q_1'$ in the first row above the diagonal, that is $j_1 \in \{2, \dots, n\}$ is such that $Q_1' := Y_{1,j_1} \geq Y_{1,\ell}$, for all $2 \leq \ell \leq n$.
Having constructed $i_1, \dots, i_m$ and $j_1, \dots , j_m$, set $i_{m+1}$ to be the smallest index such that $i_{m+1} \notin \{i_1, \dots, i_m,j_1, \dots, j_m\}$.
Then set $j_{m+1}$ to be a coordinate  of the largest admissible element $Q_{m+1}'$ in the $i_{m+1}$-st row above the diagonal, that is $j_{m+1} \in \{i_{m+1}+1,\dots, n\}\setminus \{j_1, \dots, j_m\}$ is such that 
\[
Q_{m+1}' := Y_{i_{m+1},j_{m+1}} \geq Y_{i_{m+1},\ell}, \text{ for all } \ell \in \{i_{m+1}+1,\dots, n\}\setminus \{j_1, \dots, j_m\}.
\]
Taking an even $k$ such that $(1-2\epsilon)n \leq k \leq (1-\epsilon)n$, it is an easy observation that among the $k$ elements $\{Y_{i_\ell,j_\ell}, Y_{j_\ell,i_\ell} \ | \ 1 \leq \ell \leq k/2\}$, there are no two in the same row or the same column.
Therefore, in the symmetric case  to prove \eqref{eq:greedy_sufficient} it suffices to show that almost surely 
\begin{equation}\label{greedy_sufficient_symmetric}
\sum_{i=1}^{k/2} Q_i' \geq \frac{1-4\epsilon}{2}n \log n,
\end{equation}
holds for $n$ large enough.
In the $m$-th step of the algorithm, it is clear that $i_m \geq m$.
If $i_{m} = m  + p$, that means that before choosing the $m$-th row, we had to ``skip'' $p$ rows (say $j_{\ell_1}, \dots, j_{\ell_p}$) due to choosing the reflections below the diagonal.
In that case,  $j_{\ell_1}, \dots, j_{\ell_p}$ are exactly all the elements of $j_1, \dots, j_{m-1}$ which are smaller than $i_{m}$. 
Since by the construction there is no $\ell < m$ such that $j_\ell = i_{m}$, we see that the set  $\{i_{m}+1,\dots, n\}\setminus \{j_1, \dots, j_{m-1}\}$ has exactly $n-i_{m} -(m-1-p) = n-2m+1$ elements. 
That means that $Q_{m}'$ is the maximum of $n-2m+1$ independent elements distributed as $Y$, and analogously to the i.i.d.~case, we have that $Q_1', \dots Q_{k/2}'$ are independent.

The rest of the argument goes as before. 
For a fixed $\epsilon > 0$ choose $n$ as in \eqref{eq:choice_of_n}, and observe that $\sum_{i=1}^{k/2} Q_i' < \frac{1-4\epsilon}{2}n \log n$ implies that there is $1 \leq r \leq k/2$ such that $Q_r' \leq (1-2\epsilon)\log(n-2r+1)$. 
As before, using \eqref{eq:bound_on_Q_general} we get for $n$ large enough
\[
\mathbb{P}\Big(\sum_{i=1}^{k/2} Q_i' < (1-4\epsilon)n \log n\Big)  \leq n \exp(-(\epsilon n)^{\epsilon + 2\epsilon^2}),
\]
which is summable in $n$.

\begin{remark}\label{rem:mass_at_zero_proof}
Say we allow $\xi$ to have a point mass at zero. 
Then by the known central limit theorem (see \cite{RempalaWesolowski99}) we know that $\perm A_n /(n\log n)$ converges to 1 in probability (one needs to truncate $\xi$ at some finite value).
In the algorithm from the above proof, one can apply the same argument to the matrix $B^c$, to show that $\perm A_n /(n\log n)$ converges in probability to $\beta$ for $\beta >1$.
To apply justify the application of this argument to $B^c$, one just needs to observe that in our construction, the entries of $B^c$ are independent of the values 
$Q_1, \dots, Q_k$ ($Q_1, \dots, Q_k'$) and are distributed identically to the entries of $A_{n-k}$.
\end{remark}

\end{proof}

\section{Rectangular matrices and the necessity of \eqref{eq:heavy_tail_assumption}}\label{sec:non-square}

In this section we sketch how the above arguments extend to a large class of rectangular matrices. 
Of course, here we will assume that elements are sampled independently from a distribution supported on $\mathbb{R}^+$, but will now allow the width of the matrix to be significantly larger than the height, in particular it will suffice for the height to grow at least as $\log$ of height.
%This is in sharp contrast with the limit theorems for $\perm A_n$ in \cite{} where it was assumed that the width grows polynomially in the height and the limiting values depended on the degree of this polynomial growth. 
The precise condition under the method extends is that matrix $A_n$ is $m_n \times n$, that is has height $m_n$ and width $n$, and the height satisfies the condition
\begin{equation}\label{eq:condition_on_the_height}
\lim_{n}\frac{m_n \log \log n}{\log n} = \infty.
\end{equation}
Observe that for an $m \times n$ matrix with i.i.d.~elements of mean $\mu$ we have $\mathbb{E}(\perm A_n) = \binom{n}{m} m! \mu^m$ which demonstrates that the  scaling function $n \log n$ will have to be replaced by  $m_n \log n$.

In the whole section we will assume \eqref{eq:condition_on_the_height} and that $(A_n)_n$ is a sequence of $m_n \times n$ matrices on a common probability space with positive elements which are independent and identically distributed as $\xi$.

\begin{theorem}\label{thm:main-rectangular}
Assuming that $\xi$  satisfies \eqref{eq:heavy_tail_assumption}
% \begin{equation}\label{eq:heavy_tail_assumption_rectangular}
% \lim_{t \to \infty}\frac{\log \mathbb{P}(\xi \geq t)}{\log t} = - \frac{1}{\beta},
% \end{equation}
for some $\beta > 0$,
% If $(A_n)_n$ is a sequence of $m_n \times n$ matrices on a common probability space with elements which are independent and identically distributed as $\xi$ and satisfying \eqref{eq:condition_on_the_height}, then almost surely
we have almost surely
\begin{equation}\label{eq:main_result_rectangular}
\lim_{n \to \infty} \frac{\log \perm A_n}{m_n \log n} = \max(1,\beta).
\end{equation}
\end{theorem}

The uniformity over all submatrices of linear size holds as well.

\begin{theorem}\label{thm:general_bounds-rectangular}
% Assume that $(A_n)$ is a sequence of $m_n \times n$ matrices on a common probability space with elements which are independent and identically distributed as $\xi$ and satisfying \eqref{eq:condition_on_the_height}, and let $0 < \alpha < 1$.
Fix $0 < \alpha < 1$.
\begin{itemize}
\item[i)] We have
\begin{equation}\label{eq:general_lower_bounds_rectangular}
\liminf_{n \to \infty} \min_{(k_1, k_2,B)}\frac{\log \perm B}{k_1 \log k_2} \geq 1,
\end{equation}
where the minimum is taken over all  pairs of integers $(k_1, k_2)$ satisfying $\alpha m_n \leq k_1 \leq m_n$, $\alpha n  \leq k_2 \leq n$ and $k_1 \leq k_2$ and all $k_1 \times k_2$ submatrices $B$ of $A_n$.
\item[ii)]
If  $\xi$ has a finite mean then
\begin{equation}\label{eq:general_upper_bounds_rectangular}
\limsup_{n \to \infty} \max_{(k_1, k_2, B)}\frac{\log \perm B}{k_1 \log k_2} = 1,
\end{equation}
where the maximum is taken over all pairs of  integers $(k_1,k_2)$ satisfying $\alpha m_n \leq k_1 \leq m_n$, $\alpha n  \leq k_2 \leq n$ and $k_1 \leq k_2$ and all $k_1 \times k_2$ submatrices $B$ of $A_n$.
\end{itemize}
\end{theorem}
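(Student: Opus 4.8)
The plan is to reproduce the proofs of Theorem~\ref{thm:general_bounds} and of the lower bounds in Theorem~\ref{thm:main}, the two genuinely new ingredients being a rectangular version of Lemma~\ref{lemma:lower_bound_for_submatrices} and a ``complement along the rows only'' trick that treats all admissible widths $k_2$ simultaneously. Throughout I use that $m_n=o(n)$, which is forced by \eqref{eq:condition_on_the_height}, so that $k_1\le m_n=o(k_2)$ uniformly over $k_2\ge\alpha n$ (in particular no square submatrix occurs for $n$ large). For part i), following the proof of Lemma~\ref{lemma:lower_bound_for_submatrices}, I fix $\delta>0$, choose $q>0$ with $\eta:=\mathbb{P}(\xi<q)$ small, and set $\tilde A_n=(\mathbf{1}_{\xi_{ij}\ge q})$, so $\perm B\ge q^{k_1}\perm\tilde B$ for every $k_1\times k_2$ submatrix $B$. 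The only bad event needed is $\mathfrak{B}_n=\{$some row of $\tilde A_n$ has more than $\delta\alpha n$ zeros$\}$; a standard large-deviation estimate gives $\mathbb{P}(\mathfrak{B}_n)\le C m_n e^{-cn}$, which is summable, so by Borel--Cantelli $\mathfrak{B}_n$ a.s.\ fails for all large $n$. On $\mathfrak{B}_n^c$ every $k_1\times k_2$ submatrix with $k_2\ge\alpha n$ has at least $(1-\delta)k_2$ ones in each row, and since $k_1\le m_n\le(1-\delta)k_2$ for $n$ large we are in the regime $k\ge m$ of Proposition~\ref{thm:lower_bounds_for_01}, so \eqref{eq:lower_bounds_on_01_1} gives, unconditionally,
\[
\perm\tilde B\ \ge\ \frac{k!}{(k-k_1)!}\ \ge\ (k-k_1+1)^{k_1}\ \ge\ \bigl((1-\delta)k_2-k_1\bigr)^{k_1}\ \ge\ \bigl((1-2\delta)k_2\bigr)^{k_1},
\]
where $k$ is the minimum row-sum of $\tilde B$ and the last step uses $k_1\le m_n=o(k_2)$. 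Hence $\perm B\ge\bigl(q(1-2\delta)k_2\bigr)^{k_1}$ and $\frac{\log\perm B}{k_1\log k_2}\ge 1+\frac{\log(q(1-2\delta))}{\log k_2}\to1$ uniformly over admissible $(k_1,k_2,B)$, which is \eqref{eq:general_lower_bounds_rectangular}. In contrast with the square case, no all-zero-rectangle event and no Hall marriage argument are needed, because the submatrices are very wide relative to their height so the unconditional bound \eqref{eq:lower_bounds_on_01_1} always applies.

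For part ii), by Remark~\ref{rem:stochastic_domination} and Markov's inequality I may couple $A_n$ with a matrix $\tilde A_n$ whose entries are i.i.d.\ Pareto of parameter $1$ (hence $\ge1$) so that $\perm B\le\mathbb{E}(\xi)^{k_1}\perm\tilde B$ for all submatrices; since Pareto$(1)$ satisfies \eqref{eq:heavy_tail_assumption} with $\beta=1$ and $m_n$ satisfies \eqref{eq:condition_on_the_height}, Theorem~\ref{thm:main-rectangular} gives $\log\perm\tilde A_n\le(1+\epsilon)m_n\log n$ a.s.\ for large $n$. The deterministic heart of the argument is that for every $k_1\times k_2$ submatrix $\tilde B$ of $\tilde A_n$ occupying rows $I$ and columns $J$,
\[
\perm\tilde A_n\ \ge\ \frac{(n-k_1)!}{(n-m_n)!}\,\perm\tilde B\, ,
\]
which follows by restricting the defining sum for $\perm\tilde A_n$ to injections $\pi$ with $\pi(I)\subseteq J$ and, for each resulting injection $\pi|_I$ of $I$ into $J$, bounding the contribution of the remaining $m_n-k_1$ rows below by the number of ways to match them into the $n-k_1$ columns off $\pi(I)$, namely $\frac{(n-k_1)!}{(n-m_n)!}$, which is positive and independent of $J$ because $m_n-k_1\le n-k_1$ regardless of $k_2$. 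Since $\frac{(n-k_1)!}{(n-m_n)!}$ is a product of $m_n-k_1$ factors each at least $n-m_n+1\ge n/2$, I obtain $\log\perm\tilde B\le(1+\epsilon)m_n\log n-(m_n-k_1)\log(n/2)=k_1\log n+\epsilon m_n\log n+O(m_n)$. Dividing by $k_1\log k_2$ and using $\alpha m_n\le k_1\le m_n$ and $\alpha n\le k_2\le n$, so that all error terms are $o(k_1\log k_2)$, yields $\limsup_n\max_{(k_1,k_2,B)}\frac{\log\perm B}{k_1\log k_2}\le1+\epsilon/\alpha$; letting $\epsilon\downarrow0$ gives ``$\le1$'', while the reverse inequality is already contained in part~i). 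This proves \eqref{eq:general_upper_bounds_rectangular}.

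The main obstacle is precisely the uniformity over $k_2$ in part ii). The naive complement of a $k_1\times k_2$ block inside $A_n$ is the $(m_n-k_1)\times(n-k_2)$ block, but this has fewer columns than rows — hence possibly vanishing permanent — once $k_2$ is within $O(m_n)$ of $n$, so it cannot carry $\perm\tilde B$ on the right-hand side; and bounding $\perm\tilde B$ by $\perm$ of any submatrix of height $>k_1$ loses a factor $m_n/k_1$ up to $1/\alpha$, which is fatal. Attaching the complement ``along the rows only'' to obtain an $(m_n-k_1)\times(n-k_1)$ block, which is always wide enough, removes this case distinction, and the width $k_2$ then survives only through $\log k_2=(1+o(1))\log n$. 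The remaining points are routine: the $O(m_n)$ losses (from Stirling, from the scaling factor $\mathbb{E}(\xi)^{k_1}$, and from replacing $(1-\delta)k_2-k_1$ by $(1-2\delta)k_2$) are all $o(m_n\log n)=o(k_1\log k_2)$ since $k_1\ge\alpha m_n$ and $k_2\ge\alpha n$, and one checks that applying Theorem~\ref{thm:main-rectangular} to the coupled Pareto$(1)$ matrix is legitimate because $m_n$ satisfies \eqref{eq:condition_on_the_height}.
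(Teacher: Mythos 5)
The decisive problem is your reading of \eqref{eq:condition_on_the_height}: that condition is a \emph{lower} bound on the height ($m_n$ must grow at least like $(\log n)^2/\log\log n$), not an upper bound, so it does not force $m_n=o(n)$. In particular $m_n=n$ satisfies it, and indeed Theorem~\ref{thm:general_bounds-rectangular} is meant to contain the square Theorem~\ref{thm:general_bounds} as a special case. Everything in your argument that invokes $m_n=o(n)$ therefore fails on admissible sequences. In part i), when $m_n$ is comparable to $n$ there \emph{are} admissible submatrices with $k_1>(1-\delta)k_2$ (for $m_n=n$ even square ones with $k_1=k_2$), so you cannot remain in the regime $k\geq m$ of Proposition~\ref{thm:lower_bounds_for_01} where \eqref{eq:lower_bounds_on_01_1} applies unconditionally. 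You are then forced onto \eqref{eq:lower_bounds_on_01_2}, which requires $\perm\tilde B>0$, and controlling only the row event $\mathfrak{B}_n$ does not give this: a $k\times k$ submatrix with at most $\delta k$ zeros per row can still have vanishing permanent (an all-zero column inside the submatrix already kills it). This is exactly why the paper's Lemma~\ref{lemma:lower_bound_for_submatrices} and its rectangular analogue Lemma~\ref{lemma:lower_bound_for_submatrices_rectangular} keep the all-zero-rectangle event $\mathfrak{C}_n$ and the Hall's-theorem input (Lemma~\ref{lemma:permanents_and_matchings}), and split into the cases $k_1\geq(1-\delta)k_2$ and $k_1<(1-\delta)k_2$; your claim that no such argument is needed rests on the false premise.

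Part ii) has the right skeleton and in fact coincides with the paper's: the extension count $\binom{n-k_1}{m_n-k_1}(m_n-k_1)!=\frac{(n-k_1)!}{(n-m_n)!}$ and the Markov/Pareto$(1)$ coupling are precisely the paper's steps, and your observation that the row-only complement avoids any case distinction in $k_2$ is correct. But the estimate ``each factor is at least $n-m_n+1\geq n/2$'' again needs $m_n\leq n/2$, which the hypothesis does not provide; for $m_n$ close to $n$ the inequality $\frac{(n-k_1)!}{(n-m_n)!}\geq(n/2)^{m_n-k_1}$ is simply false (take $m_n=n$, $k_1=\alpha n$). What is actually needed is a Stirling estimate of $\log\frac{(n-k_1)!}{(n-m_n)!}$ and a proof that $(m_n-k_1)\log n-(n-k_1)\log(n-k_1)+(n-m_n)\log(n-m_n)=o(m_n\log n)$ uniformly over $\alpha m_n\leq k_1\leq m_n$; this is \eqref{eq:left_to_prove} in the paper, handled there by a monotonicity argument splitting at $k_1=n(1-1/e)$ (and, in the square proof, by \eqref{eq:useful_lowerbound}). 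So your proposal is a correct and genuinely simpler argument in the strictly rectangular regime $m_n=o(n)$ (where it buys you the elimination of $\mathfrak{C}_n$, Hall's theorem, and the case analysis), but it does not prove the theorem as stated; to cover $m_n\asymp n$ you need the full Lemma~\ref{lemma:lower_bound_for_submatrices_rectangular} for i) and the sharper factorial analysis for ii).
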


The proofs of these theorems are  modifications of the arguments in the previous two sections, so we will provide sketch of proofs. 
%We will briefly sketch how the modifications go and at which points one needs to do a more careful analysis.
Note that, to simplify notation, we will drop the ceiling and the floor notation throughout the section.

\begin{proof}[Sketch of the proof of the upper bounds in Theorem \ref{thm:main-rectangular}]
As before, by stochastic domination, it suffices to prove the claim when elements are Pareto distributed. To end this one needs to prove a version of Lemma \ref{lemma: MaxPerm} which states that when $(Y_{i,j})$ are independent exponentially distributed with rate one and $\lambda > 1$, we have
\[
\sum_{n = 2}^{\infty}\mathbb{P}\left(\max_{\pi \in S_{m_n,n}}\sum_{i=1}^{m_n} Y_{i,\pi(i)} \geq m_n\log n + m_n \frac{\log\log n}{\lambda}\right) < \infty.
\]
Proceeding as in the proof of Lemma \ref{lemma: MaxPerm} one is left to show that for some fixed $c<1$ and any $\lambda >1$
\[
%\sum_{n=2}^\infty\exp\biggl(\Big(\frac{1}{\log n -1} - \frac{\log \log n}{\lambda \log n}\Big)m_n\biggr) < \infty,
\sum_{n=2}^\infty \Big((1-c)(\log n)^{c/\lambda}\Big)^{-m_n} < \infty,
\]
which follows from \eqref{eq:condition_on_the_height}.
%for some $\lambda > 1$.
%By \eqref{eq:condition_on_the_height} for $\lambda > 1$ small enough the expression in the exponent is not larger than $-\alpha \log n$ for some $\alpha > 1$ which yields the claim.

Next one defines the analog of \eqref{eq:number_in_interval} as
\begin{equation}\label{eq:Z_for_rectangular}
Z_{n,k}=\bigg|\biggl\{\pi \in S_{m_n,n} : (k-1)m_n \leq \sum_{i=1}^{m_n}Y_{i,\pi(i)} <  km_n \biggr\}\biggr|,
\end{equation}
and needs to prove \eqref{eq: BoundZMax}. Calculating expectation of $Z_{n,k}$ 
\begin{equation}\label{eq:expectation_of_Z_rectangular}
\binom{n}{m_n}e^{-km_n}m_n^{m_n}(k-1)^{m_n-1}  \leq \mathbb{E}(Z_{n,k}) \leq \binom{n}{m_n}e^{-(k-1)m_n}(km_n)^{m_n},
\end{equation}
for $k>1$, and $\mathbb{E}(Z_{n,1})\geq \binom{n}{m_n}m_n!e^{-m_n}$ for $k=1$.
%For $k=1$ this yields $\mathbb{E}(Z_{n,1})\geq \binom{n}{m_n}m_n!e^{-m_n}$ 
The last inequality  handles the sum $\sum_{n \geq 2}\mathbb{E}(Z_{n,1})^{1-\gamma}$. 
We are left to prove the analog of \eqref{eq: BoundUp}, that
\[
 \sum_{2 \leq k \leq \log n + \frac{\log \log n}{\lambda}} \frac{1}{ \mathbb{E}(Z_{n,k})^{\gamma-1}}  \leq \frac{e^{m_n (\gamma-1)}}{\binom{n}{m_n}^{\gamma-1}m_n^{m_n(\gamma-1)}} \sum_{1 \leq k \leq \log n + \frac{\log \log n}{\lambda}} \frac{e^{km_n(\gamma-1)}}{k^{m_n (\gamma-1)}}
\]
is summable in $n$.
Again by the convexity of $g(t)=e^{tm_n(\gamma-1)}t^{-m_n (\gamma-1)}$ and the fact that $g(1) \leq g(\log n + \frac{1}{\lambda} \log \log n)$, proceeding as before one is left to prove that
\[
\left(\frac{ne}{m_n (\log n)^{1-1/\lambda}}\right)^{m_n(\gamma-1)}\frac{\log n + \frac{1}{\lambda}\log \log n}{\binom{n}{m_n}^{\gamma-1}}
\]
is summable in $n$.
By \eqref{eq:condition_on_the_height}, $(\log n)^{-\kappa m_n}$ is summable, for any $\kappa > 0$, and it is enough to show that $\binom{n}{m_n} \geq (cn/m_n)^{m_n}$, for some $c> 0$, which is simple.
% Taking $\log$s on both sides and using \eqref{eq:stirling_for_binom} (we can assume that $m_n < n$) it is enough to show that
% \[
% (n-m_n +1/2) (\log n - \log (n-m_n)) \geq m_n \log c + \frac{1}{2}\log m_n,
% \]
% which holds for $c< 1$ and $n$ large enough (since the left hand side is positive and the right hand side negative). This proves \eqref{eq: BoundZMax} for \eqref{eq:Z_for_rectangular}.

To finish the proof assume that both
\begin{align*}
& \max_{\pi \in S_{m_n,n}}\sum_{i=1}^{m_n}Y_{i,\pi(i)} \leq m_n \log n + \frac{m_n \log\log n}{\lambda} \ \ {\rm and}  \\
& Z_{n,k} \leq \mathbb{E}(Z_{n,k})^{\gamma}, \text{ for each } 1 \leq k \leq \log n + \frac{\log \log n}{\lambda},
\end{align*}
hold for some $\lambda > 1$, which is true for $n$ large enough almost surely.
The same  calculations as  in the proof of of the upper bounds in Theorem \ref{thm:main} and \eqref{eq:expectation_of_Z_rectangular} yield
\[
\perm A \leq \binom{n}{m_n}^{\gamma}n^{(\beta-\gamma)m_n}m_n^{m_n\gamma}(\log n)^{\frac{\beta - \gamma}{\lambda}m_n}e^{m_n\gamma}\Big(\log n + \frac{1}{\lambda}\log \log n\Big)^{\gamma m_n}.
\]
Dominant terms are $\binom{n}{m_n}^{\gamma}n^{(\beta-\gamma)m_n}m_n^{m_n\gamma}$. 
Taking logs one sees that it remains to show
% and
% \[
% \frac{\log \perm A}{m_n \log n} \leq \gamma\frac{\log \binom{n}{m_n}}{m_n \log n} + \beta -\gamma + \gamma \frac{\log m_n}{\log n}+ \frac{\beta - \gamma}{\lambda}\frac{\log \log n}{\log n} + \frac{\gamma}{\log n} + \gamma \frac{\log (\log +\frac{1}{\lambda} \log \log n)}{\log n}.
% \]
% The last three terms vanish in the limit and so it remains to prove 
\[
\limsup_{n \to \infty}\Big(\frac{\log \binom{n}{m_n}}{m_n \log n} + \frac{\log m_n}{\log n}\Big) \leq 1.
\]
After applying \eqref{eq:stirling_for_binom} we are left with
\[
\limsup_{n} \Big(\frac{n}{m_n}-\Big(\frac{n}{m_n} -1\Big)\frac{\log (n-m_n)}{\log n}\Big) \leq 1.
\]
which  follows from \eqref{eq:useful_lowerbound}. For $\beta \leq 1$ one can repeat the calculations, or simply refer to stochastic domination.
\end{proof}

The proof of Theorem \ref{thm:general_bounds-rectangular} is based on the following equivalent of Lemma \ref{lemma:lower_bound_for_submatrices}.

\begin{lemma}\label{lemma:lower_bound_for_submatrices_rectangular}
%Let $\xi$ be a positive random variable and let $A_n$ be a sequence of $m_n \times n$ matrices whose elements are independent and identically distributed as $\xi$ and which height $m_n$ satisfies \eqref{eq:condition_on_the_height}. 
For any $0 < \alpha < 1$ and any $\delta > 0$ there exists $r>0$ with the following property: Almost surely there exists $n_0$ such that for any $n \geq n_0$ and any pair of integers $(k_1,k_2)$ satisfying $\alpha m_n \leq k_1 \leq m_n$, $\alpha n \leq k_2 \leq n$, and $k_1 \leq k_2$, any $k_1 \times k_2$ submatrix $B$ of $A_n$ satisfies $\perm B \geq r^{k_1} k_2^{(1-\delta)k_1}$.
\end{lemma}

\begin{proof}[Sketch of the proof of Lemma \ref{lemma:lower_bound_for_submatrices_rectangular}]
One follows the proof of Lemma \ref{lemma:lower_bound_for_submatrices}. In the definitions one needs to write $n$ for the width of the matrix and $m_n$ for the height, for example $\mathfrak{C}_n$  is defined as the event that for some pair of integers $(k_1,k_2)$ satisfying $1 \leq k_1 \leq m_n$, $1 \leq k_2 \leq n$ and $k_1 + k_2 \geq \alpha n$ some $k_1 \times k_2$ submatrix of $A_n$ contains only zeros, and $\mathfrak{B}_n$ is defined as before. On $\mathfrak{B}_n^c \cap \mathfrak{C}_n^c$ one has
\[
\perm B \geq \left\{\begin{array}{ll}
q^{k_1} \lfloor(1-\delta)k_2 \rfloor!, & \text{ for } k_1 \geq (1-\delta)k_2\\
q^{k_1} \frac{\lfloor(1-\delta)k_2 \rfloor!}{(\lfloor(1-\delta)k_2 \rfloor - k_1)!}, & \text{ for } k_1 <(1-\delta)k_2.
\end{array}\right.
\]
In either case, logs of the right hand side is larger than $(1-\delta)k_1\log k_2 + ak_1$, where $a$ is a constant which depends only on $q$ and $\delta$.
% In the first case the logarithm of the right hand side will be bounded from below by 
% \[
% k_1 \log q + (1-\delta)k_2 \log ((1-\delta)k_2) - (1-\delta)k_2 \geq k_1 (\log q -1 + \log(1-\delta)) + (1-\delta)k_1\log k_2,
% \]
% and in the second
% \begin{multline*}
% k_1 \log q + ((1-\delta)k_2+1/2)\log ((1-\delta)k_2) - ((1-\delta)k_2-k_1+1/2)\log ((1-\delta)k_2 - k_1) + k_1  \\
% \geq k_1\log ((1-\delta)k_2) + k_1(1+ \log q),
% \end{multline*}
% which is sufficient in both cases.
Probability of the event $\mathfrak{B}_n$ is estimated as before. For $\mathfrak{C}_n$ one can follow the arguments in \eqref{eq:large_deviations_for _rectangles} starting with
\[
\mathbb{P}(\frak{C}_n)  \leq  2\sum_{\alpha n/2 \leq k_2 \leq n} \binom{n}{k_2} \sum_{1\leq k_1 \leq m_n}\binom{m_n}{k_1}\eta^{k_1k_2}.
\]
This inequality follows from the simple fact that $m_n \leq n$ and $k_1 \leq k_2$ imply that $\binom{m_n}{k_2}\binom{n}{k_1} \leq \binom{m_n}{k_1}\binom{n}{k_2}$.
\end{proof}

\begin{proof}[Sketch of the proof of Theorem \ref{thm:general_bounds-rectangular}]
 Part i) follow from Lemma \ref{lemma:lower_bound_for_submatrices_rectangular}. For part ii) again use Markov's inequality and reduce to the case when elements of $A_n$ are parameter 1 Pareto distributed.
Similarly as before observe that for any  $k_1 \times k_2$ submatrix $B$, any term in the sum defining $\perm B$  can be expanded in $\binom{n-k_1}{m_n - k_1}(m_n-k_1)!$ ways to a term in the sum defining $\perm A_n$. Since all elements of $A_n$ are greater or equal than $1$ we have
\[
\log \perm B  \leq \log \perm A_n - \log \left(\binom{n-k_1}{m_n - k_1}(m_n-k_1)!\right) 
%\leq \log \perm A_n - (n-k_1+1/2)\log(n-k_1) + (n-m_n+1/2)\log (n-m_n) + (m_n-k_1).
\]
Use proven upper bounds in Theorem \ref{thm:main-rectangular}, Stirling's formula and drop the low order terms to get for $n$ large enough
%Terms $m_n - k_1$, $\log (n-m_n)$ and $\log (n-k_1)$ are $o(k_1 \log k_2)$. After disregarding them, by the proven upper bounds in Theorem \ref{thm:main-rectangular}, almost surely for any $\epsilon > 0$  one has for $n$ large enough 
\[
\log \perm B \leq (1+\epsilon)m_n \log n    + (n-m_n)\log(n-m_n) - (n-k_1)\log(n-k_1)
\]
We are left to prove that
\begin{equation}\label{eq:left_to_prove}
- (n-k_1) \log(n-k_1) + (n-m_n)\log(n-m_n) + (m_n-k_1)\log n \leq o(m_n \log n),
\end{equation}
which is a bit tedious but elementary.
% uniformly in $\alpha m_n \leq k_1 \leq m_n$. 
% Since the left hand side (as a function of $k_1$) is increasing on $(0, n(1-1/e))$ and decreasing on $(n(1-1/e),1)$ it suffices to prove \eqref{eq:left_to_prove} for $k_1 =m_n$ when $m_n \leq n(1-1/e)$ and for $k_1 = n(1-1/e)$ when $n(1-1/e) < m_n \leq n$ (assuming $\alpha < 1-1/e$ which is of no loss of generality). For $k_1 = m_n$ the left hand side is $0$ so there is nothing to prove, and for $k_1 = n(1-1/e)$ the left hand side is equal to
% \[
% \frac{n}{e} - (n-m_n)\big(\log n - \log (n-m_n)\big),
% \]
% which is not larger than an $o(n \log n)$ term, and this suffices in the case $n(1-1/e) < m_n \leq n$.
\end{proof}

\begin{proof}[Sketch of the proof of the lower bounds in Theorem \ref{thm:main-rectangular}]
The lower bounds for $\beta \leq 1$ case follow directly from Theorem \ref{thm:general_bounds-rectangular} i). For the case $\beta > 1$ one can follow the arguments almost verbatim. 
%In \eqref{eq:lower_bounds_estimates_on_almost_exponential} one needs to replace $\rho n$ by $\rho m_n$ in the upper limit in the sum and in the factor in the end. 
Starting from the first row recursively extract the largest admissible elements, and run this greedy algorithm for $k=\rho m_n$ steps.
Extract elements of $(\log \xi_{i,j})/\beta$ whose sum is at least $(1-4\epsilon)m_n\log n$ for $n$ large enough, and apply Lemma \ref{lemma:lower_bound_for_submatrices_rectangular} on the complement submatrix.
%where $\rho$ is defined as in \eqref{eq: HelpBound} with $n$ replaced by $m_n$ (except under $\log$s). 
% One defines the submatrices $B$ and $B^c$ as before and arrives at the analog of \eqref{eq: ThirdBound}
% \[
% \mathbb{P}\Big(\frac{\log \perm B^c}{\beta m_n \log n} \leq t\Big)\leq \sum_{i=1}^k\mathbb{P}\Big(\frac{Q_i}{\log(n-i+1)} \leq t'\Big)
% \leq \rho m_n\exp\bigl(-(1-\rho)^{1-t'(1+\epsilon)}n^{1-t'(1+\epsilon)}\bigr).
% \]
%To finish the proof apply Lemma \ref{lemma:lower_bound_for_submatrices_rectangular} on $B$ identically as Lemma \ref{lemma:lower_bound_for_submatrices} in Section \ref{sec:lower_bounds}.
\end{proof}

The following example shows that Theorem \ref{thm:main} in general fails when the limit in \eqref{eq:heavy_tail_assumption} does not exist. Actually this is possible at arbitrary small oscillations of the sequence in \eqref{eq:heavy_tail_assumption}. We present the argument for square matrices. 
Note that we will use the fact that the upper bounds (lower bounds) in \eqref{eq:heavy_tail_assumption} imply the upper bound on $\limsup$ (lower bound on $\liminf$) in Theorem \ref{thm:main}.

\begin{example}\label{ex:no_convergence}
Let $S=\{k_i\}$ be a set of positive integers such that $k_{i+1} > 2k_i$. Fix $C_2 >  C_1 > \lambda > 1$ and for every $k \geq 1$ define  the following sequences of positive real numbers 
\[
t_k = \exp(\lambda^k),  \ \tilde{p}_k' = \exp(- \lambda^k/C_1), \text{ and } p_k' = \left\{\begin{array}{ll}\exp(- \lambda^k/C_1), & \text{ for }k \notin S \\
\exp(- \lambda^k/C_2), & \text{ for } k \in S.
\end{array}
\right.
\]
Clearly both series $\sum_k p_k'$ and $\sum_k \tilde{p}_k'$ converge, so we can normalize the sequences with the its sums $Z$ and $\tilde{Z}$ respectively and obtain sequences $p_k = p_k'/Z$ and $\tilde{p}_k = \tilde{p}_k'/\tilde{Z}$.
% \[
% \mathbb{P}(t_k) = p_k := \frac{1}{Z}\tilde{p}_k = 
% \left\{\begin{array}{ll}
% \exp(- \lambda^k/C_2)/Z, & \text{ for } k \notin S, \\
% \exp(- \lambda^k/C_1)/Z, & \text{ for } k \in S,
% \end{array}\right.
% \]
Let $\xi$ and $\tilde{\xi}$ be random variables supported on the set $\{t_k\}$ with distributions $\mathbb{P}(\xi = t_k) = p_k$ and $\mathbb{P}(\tilde{\xi} = t_k) = \tilde{p}_k$. Observing that the mappings $t \mapsto \mathbb{P}(\xi \geq t)$ and $t \mapsto \mathbb{P}(\tilde{\xi} \geq t)$ are constant on $(t_k , t_{k+1}]$ and that  $\mathbb{P}(\xi \geq t_k) \leq 2p_k$, for all $k \in S$ large enough and for infinitely many $k \notin S$ as well,  and $\mathbb{P}(\tilde{\xi} \geq t_k) \leq 2\tilde{p}_k$, for all $k$ large enough, it is easy to see that
\begin{align}
&\liminf_{t \to \infty} \frac{\log \mathbb{P}(\xi \geq t)}{\log t} = -\frac{1}{C_1/\lambda} < -\frac{1}{C_2} = \limsup_{t \to \infty} \frac{\log \mathbb{P}(\xi \geq t)}{\log t}, \label{eq:example_bounds_1} \\
&\liminf_{t \to \infty} \frac{\log \mathbb{P}(\tilde{\xi} \geq t)}{\log t} = -\frac{1}{C_1/\lambda} < -\frac{1}{C_1} = \limsup_{t \to \infty} \frac{\log \mathbb{P}(\tilde{\xi} \geq t)}{\log t} \label{eq:example_bounds_2}.
\end{align}

As usual let $(A_n)$ denote a sequence of $n \times n$ matrices on a common probability space with independent elements distributes as $\xi$.  We will prove that
\begin{equation}\label{eq:example_statement}
\liminf_n \frac{\log \perm A_n}{n \log n} \leq C_1, \text{ and } \limsup_n \frac{\log \perm A_n}{n \log n} = C_2
\end{equation}

To get the upper bound on $\liminf$ take a sequence $(\ell_i)$ of positive integers such that $k_i < \ell_i < k_{i+1}$ and that sequences 
 $(\ell_i - k_i)$ and $(k_{i+1} -\ell_i)$ are strictly increasing.
%consider some $k$ with the property that for some $i$ we have $t_{k_i} < t_k <t_{k_{i+1}}$ and $\min(t_k/t_{k_i}, t_{k_{i+1}}/t_k) > 3$. 
Define integers $n_i = \exp(\lambda^{\ell_i}/C_1)$. By a simple union bound the probability that $A_{n_i}$ contains an element $t_\ell$, for some $\ell \geq k_{i+1}$ is bounded from above by 
\[
2n_i^2p_{k_{i+1}} =\frac{1}{Z}\exp(2\lambda^{\ell_i}/C_1) 2\exp(-\lambda^{k_{i+1}} /C_2),
\]
for $i$ large enough. This expression is summable in $i$, so almost surely $A_{n_i}$ does not contain elements greater than $t_{k_{i+1}}$ for $i$ large enough. Thus to prove the first inequality in \eqref{eq:example_statement} one can assume that elements in $A_{n_i}$ are distributed as $\xi\mathbf{1}_{(\xi < t_{k_{i+1}})}$. 
Next observe that $\xi\mathbf{1}_{(\xi < t_{k_{i+1}})}$ is stochastically dominated by $t_{k_i}\tilde{\xi}$, that is 
\[
\mathbb{P}(t \leq \xi < t_{k_{i+1}}) \leq \mathbb{P}(t_{k_i}\tilde{\xi} \geq t).
\]
While the inequality is trivial for $t\geq t_{k_{i+1}}$ and for $t \leq t_{k_i}$, for $t_{k_i} < t < t_{k_{i+1}}$ it follows from the fact that  for $J \subset (t_{k_i},t_{k_{i+1}})$ 
\[
\mathbb{P}(\xi \in J) =\frac{\sum_{k : t_k\in J}p_k'}{Z}  \leq \frac{\sum_{k : t_k\in J}\tilde{p}_k'}{\tilde{Z}} = \mathbb{P}(\tilde{\xi} \in J),
\] 
since $p_k' = \tilde{p}_k'$, for $t_k \in J$ and $\tilde{Z} \leq Z$.
Thus if $\tilde{A}_n$ is the sequence of $n \times n$ matrices whose elements are identical and distributed as $\tilde{\xi}$ then
\[
\liminf_{n} \frac{\log \perm A_n}{n \log n} \leq \limsup_i \frac{n_i\log t_{k_i} + \log \perm \tilde{A}_{n_i}}{n_i \log n_i} \leq \lim_i \frac{\lambda^{k_i}}{\lambda^{\ell_i}/C_1} + C_1 = C_1.
\]
Here the second inequality follows form the upper bounds in Theorem \ref{thm:main} and \eqref{eq:example_bounds_2}.

% $\mathbb{P}$ and as usual let $A_n$ be a sequence of $n \times n$ matrices with independent elements distributed as $\xi$.
% Since $\mathbb{P}([t,\infty))$ is a constant function on the intervals $(t_k,t_{k+1}]$ and for $k$ large enough
% \[
% \mathbb{P}(\xi > t_k) = \geq \frac{2}{Z}\exp(-\lambda^{k+1}/C_1) = \frac{2}{Z}t_k^{\lambda/C_1} , \text{ and }\mathbb{P}(\xi \geq t_k) \leq \frac{2}{Z}\exp(-\lambda^k/C_2) = \frac{2}{Z}t_k^{1/C_2},
% \]
% we see that 
% \[
% \liminf_{t \to \infty} \frac{\mathbb{P}(\xi \geq t)}{\log t} \leq \frac{C_1}{\lambda} < C_2 \leq \limsup_{t \to \infty} \frac{\mathbb{P}(\xi \geq t)}{\log t}.
% \]
To prove the second relation in \eqref{eq:example_statement} fix $k\in S$, $\epsilon > 0$ and define the integer  $n = n_k = \exp((1+\epsilon)\lambda^k/C_2)$. We proceed with a greedy algorithm analogous to the one in the proof of the lower  bound in Theorem \ref{thm:main}. With the probability $1- (1-p_k)^n$  there is an element in the first row of $A^{(0)} = A_n$ equal to $t_k$.  On this event take the first such element, remove the corresponding column and the first row from $A_n$ and obtain the $(n-1) \times (n-1)$ matrix $A^{(1)}$ which is independent of the first row and is distributed as $A_{n-1}$. Now repeat the step with $A^{(1)}$ instead of $A^{(0)}$ and proceed recursively as long as one is successful at each step. For $0 < \rho < 1$ one will not be able to proceed till step $\rho n $ with probability at most
\[
\sum_{(1-\rho) n \leq i \leq  n}(1-p_k)^i \leq \frac{(1-p_k)^{(1-\rho)n}}{p_k} \leq 3Z\exp\Big(-\frac{1-\rho}{Z}\exp(\epsilon\lambda^k/C_2)+\lambda^k/C_2\Big),
\]
if $k$ is chosen large enough. The right hand side is clearly summable in $k$ and thus almost surely for $k$ large enough and $n=n_k$ constructed as above, the above algorithm will be successful for $\rho n$ steps. In that case one can get lower bound on $\perm A_n$ as in the proof of the lower bounds in Theorem \ref{thm:main}: The matrix at the intersection of the first $\rho n$ rows and the removed columns is bounded from below by the product of extracted elements, that is $t_k^{\rho n}$ and the matrix at the intersection of the last $(1-\rho)n$ rows and non-removed columns is bounded from below by $((1-\rho) n)!$ (since all of it's elements are greater or equal than $1$). Therefore for $k$ large enough and $n$ constructed as above $\perm A_n \geq t_k^{\rho n} ((1-\rho) n)!$. Since $\log ((1-\rho) n)!/(n \log n) \to 1-\rho$ and
\[
\frac{\log t_k^{\rho n}}{n \log n}  \geq \frac{\rho\log t_k}{\log n} \geq \frac{\rho \lambda^k }{(1+\epsilon)\lambda^k  /C_2} \to \frac{C_2 \rho}{1+\epsilon},
\]
we obtain that almost surely
\[
\limsup_{n \to \infty} \frac{\log \perm A_n}{n \log n} \geq \frac{C_2 \rho}{1+\epsilon} + 1- \rho.
\]
By sending $\rho \to 1$ and $\epsilon \to 0$ we get $C_2$ as the lower bound on the $\limsup$, and by upper bounds in Theorem \ref{thm:main} and \eqref{eq:example_bounds_1} it is equal to $C_2$.
\end{example}

\textbf{Acknowledgments}
The author would like to thank Professor Sourav Chatterjee for suggesting this problem and for generous help and guidance with this paper.

% \bibliography{perm}

\begin{thebibliography}{1}

\bibitem{ArratiaTavare}
Richard Arratia and Simon Tavar{\'e}.
\newblock The cycle structure of random permutations.
\newblock {\em Ann. Probab.}, 20(3):1567--1591, 1992.



\bibitem{EsHelmers88}
A.~J. van Es and R.~Helmers.
\newblock Elementary symmetric polynomials of increasing order.
\newblock {\em Probab. Theory Related Fields}, 80(1):21--35, 1988.


\bibitem{Girko71}
V.~L. Girko.
\newblock Inequalities for a random determinant and a random permanent.
\newblock {\em Teor. Verojatnost. i Mat. Statist. Vyp.}, 4:48--57, 1971.

\bibitem{Girko72}
V.~L. G{\={\i}}rko.
\newblock A sharpening of certain theorems for a random determinant and
  permanent.
\newblock {\em Teor. Verojatnost. i Mat. Statist.}, (7):28--32, 163, 1972.

\bibitem{Hall48}
Marshall Hall, Jr.
\newblock Distinct representatives of subsets.
\newblock {\em Bull. Amer. Math. Soc.}, 54:922--926, 1948.

\bibitem{Hall35}
P.~Hall.
\newblock On representatives of subsets.
\newblock {\em Journal of the London Mathematical Society}, s1-10(1):26--30,
  1935.

\bibitem{Janson94}
Svante Janson.
\newblock The numbers of spanning trees, {H}amilton cycles and perfect
  matchings in a random graph.
\newblock {\em Combin. Probab. Comput.}, 3(1):97--126, 1994.


\bibitem{KB92}
V.~S. Korolyuk and Yu.~V. Borovskikh.
\newblock Random permanents and symmetric statistics.
\newblock In {\em Probability theory and mathematical statistics ({K}iev,
  1991)}, pages 176--187. World Sci. Publ., River Edge, NJ, 1992.

\bibitem{TranVu}
Tran~Vinh Linh and Van Vu.
\newblock Random matrices {I}: combinatorial problems.
\newblock {\em Acta Math. Vietnam.}, 35(3):335--354, 2010.

% \bibitem{KorolyukBorovskikh93}
% V.~S. Korolyuk and Yu.~V. Borovskikh.
% \newblock {\em Sluchainye permanenty}.
% \newblock Akad. Nauk Ukrainy Inst. Mat., Kiev, 1993.

\bibitem{Mann_Ryser53}
H.~B. Mann and H.~J. Ryser.
\newblock Systems of distinct representatives.
\newblock {\em Amer. Math. Monthly}, 60:397--401, 1953.

\bibitem{Minc78}
Henryk Minc.
\newblock {\em Permanents}, volume 9999 of {\em Encyclopedia of Mathematics and
  its Applications}.
\newblock Addison-Wesley Publishing Co., Reading, Mass., 1978.
\newblock With a foreword by Marvin Marcus, Encyclopedia of Mathematics and its
  Applications, Vol. 6.



% \bibitem{Rempala96}
% Grzegorz Rempa{\l}a.
% \newblock Asymptotic behavior of random permanents.
% \newblock {\em Random Oper. Stochastic Equations}, 4(1):33--42, 1996.

\bibitem{RempalaWesolowski99}
Grzegorz Rempa{\l}a and Jacek Weso{\l}owski.
\newblock Limiting behavior of random permanents.
\newblock {\em Statist. Probab. Lett.}, 45(2):149--158, 1999.


\bibitem{RW02}
Grzegorz~A. Rempa{\l}a and Jacek Weso{\l}owski.
\newblock Strong laws of large numbers for random permanents.
\newblock {\em Probab. Math. Statist.}, 22(2, Acta Univ. Wratislav. No.
  2470):201--209, 2002.


\bibitem{RempalaWesolowski08}
Grzegorz~A. Rempa{\l}a and Jacek Weso{\l}owski.
\newblock {\em Symmetric functionals on random matrices and random matchings
  problems}, volume 147 of {\em The IMA Volumes in Mathematics and its
  Applications}.
\newblock Springer, New York, 2008.
\newblock With a foreword by Douglas N. Arnold and Arnd Scheel.

\bibitem{TaoVu08}
Terence Tao and Van Vu.
\newblock On the permanent of random {B}ernoulli matrices.
\newblock {\em Adv. Math.}, 220(3):657--669, 2009.

\end{thebibliography}
% \bibliographystyle{plain}

\end{document}